\definecolor{verydarkblue}{rgb}{0,0,0.5}
\theoremstyle{plain}
\newtheorem{introtheorem}{Theorem}
\crefname{introtheorem}{Theorem}{Theorems}
\newtheorem{theorem}{Theorem}
\newtheorem{lemma}[theorem]{Lemma}
\newtheorem{proposition}[theorem]{Proposition}
\newtheorem{corollary}[theorem]{Corollary}
\theoremstyle{definition}
\newtheorem{definition}[theorem]{Definition}
\newtheorem{example}[theorem]{Example}
\theoremstyle{remark}
\newtheorem{remark}[theorem]{Remark}
\numberwithin{theorem}{section}
\numberwithin{equation}{section}
\newcommand{\Q}{\mathbb{Q}}
\newcommand{\PP}{\mathbb{P}}
\newcommand{\R}{\mathbb{R}}
\newcommand{\C}{\mathbb{C}}
\newcommand{\Z}{\mathbb{Z}}
\newcommand{\FF}{\mathbb{F}}
\newcommand{\Aff}{\mathbb{A}}
\newcommand{\aid}{\mathfrak{a}}
\newcommand{\OS}{\mathcal{O}}
\newcommand{\I}{\mathcal{I}}
\newcommand{\F}{\mathcal{F}}
\newcommand{\LL}{\mathcal{L}}
\newcommand{\M}{\mathcal{M}}
\newcommand{\X}{\mathcal{X}}
\newcommand{\Y}{\mathcal{Y}}
\newcommand{\spec}{\operatorname{Spec}}
\newcommand{\proj}{\operatorname{Proj}}
\newcommand{\ord}{\operatorname{ord}}
\newcommand{\vol}{\operatorname{vol}}
\newcommand{\DF}{\operatorname{DF}}
\newcommand{\Ding}{\operatorname{Ding}}
\newcommand{\lct}{\operatorname{lct}}
\newcommand{\glct}{\operatorname{Glct}}
\newcommand{\im}{\operatorname{Im}}
\newcommand{\aut}{\operatorname{Aut}}
\newcommand{\val}{\operatorname{Val}}
\newcommand{\Gval}{\operatorname{GVal}}
\newcommand{\gr}{\operatorname{gr}}
\newcommand{\PGL}{\operatorname{PGL}}
\title[A Note on Equivariant K-stability]{A Note on Equivariant K-stability}
\author{Ziwen Zhu}
\address{Department of Mathematics, University of Utah, Salt Lake City, UT 84112, USA}
\email{{\tt zzhu@math.utah.edu}}
\thanks{Research partially supported by NSF Grant DMS-1700769}
\begin{document}
\begin{abstract}
We define $G$-pseudovaluations on a variety with a group action $G$. By introducing $G$-pseudovaluations, we are able to give some criteria for $G$-equivariant K-stability of Fano varieties which are parallel to existing results for usual K-stability.
\end{abstract}
\maketitle
\section{Introduction}
We work over the field $\C$ of complex numbers. A $\Q$-Fano variety is a normal projective variety with klt singularities such that the anti-canonical divisor is $\Q$-Cartier and ample. 

It was conjectured that in order to test K-(poly/semi)stability of a $\Q$-Fano variety it is enough to examine equivariant test configurations with respect to a finite or connected reductive subgroup $G$ of $\aut(X)$. For the case of Fano manifolds, an analytic proof was given in \cite{DS16}. When $G$ is a torus group, for any $\Q$-Fano variety, an algebraic proof was provided in \cite{LX16} for K-semistability and in \cite{LWX18} for K-polystability. Recently after the first version of this paper had been posted online, the finite group case of the conjecture was solved in \cite{LZ20}, and then a full solution to the conjecture eventually was given by Zhuang in \cite{zhuang2020}.

The purpose of this short note however is to provide another perspective on equivariant K-stability for $\Q$-Fano varieties with an arbitrary group action. By replacing the space of valuations with a special collection of pseudovaluations in terms of the group action, we give parallel results to two existing criteria on characterizing K-stability from \cite{Fuj16} and \cite{Li17K,LL19,LX16}. Indeed, for any variety $X$, let $G\subset \aut(X)$ denote a group action on $X$. For any valuation $v$ on $X$, we define
$$
G\cdot v:=\inf_{g\in G} g\cdot v,
$$
where $g$ acts on the valuation $v$ by $g\cdot v(f)=v(f\circ g)$ for any $f\in \C(X)$. We call $G\cdot v$ a $G$-pseudovaluation and denote all $G$-pseudovaluations on $X$ by $\Gval_X$. Note that all $G$-invariant valuations, which we denote by $\val_X^G$, are contained in $\Gval_X$. For any $G$-pseudovaluation $G\cdot v$, and a nonnegative real number $x$, we can define the ideal sheaf $\aid_x(G\cdot v)$ to be 
$$
\aid_x(G\cdot v)=\bigcap_{g\in G} \aid_x(g\cdot v),
$$
where for any valuation $w$, $\aid_x(w)$ is the ideal sheaf of regular functions with vanishing order no less than $x$ with respect to $w$.
Refer to Section \ref{pre} for details about the definition of $G$-pseudovaluations.

The first theorem is about valuative criteria of equivariant K-stability parallel to the main results in \cite{Fuj16}. Let $X$ be a $\Q$-Fano variety and $G\subset \aut(X)$ a group action on $X$. For a prime divisor $F$ over $X$, let $\ord_F$ be the corresponding divisorial valuation. We define the $G$-equivariant beta invariant of $F$ to be
$$
\beta^G(F):=A_X(F)(-K_X)^n-\int_0^{+\infty}\vol_X\left(\OS_X(-K_X)\otimes\aid_x(G\cdot \ord_F)\right)\,dx.
$$

We say that $F$ is of finite orbit if the orbit of the valuation $\ord_F$ under $G$-action is finite. We say that $F$ is $G$-dreamy if $F$ is of finite orbit and moreover the graded ring
$$
\bigoplus_{k,j\geq 0} H^0\left(X,\OS_X(-kK_X)\otimes\aid_j(G\cdot \ord_F)\right)
$$
is finitely generated.

Define 
$$
\tau^G(F):=\sup\{t>0|\vol_X\left(\OS_X(-K_X)\otimes\aid_t(G\cdot \ord_F)\right)>0\}
$$
and
$$
j^G(F)=\int_0^{\tau^G(F)}\left(\vol_X(-K_X)- \vol_X\left(\OS_X(-K_X)\otimes\aid_x(G\cdot \ord_F)\right)\right)\,dx.
$$

Note that for $G$-invariant divisors over $X$, the above definitions coincide with the usual ones defined in \cite{Fuj16}.

The following theorem gives valuative criteria of K-stability in terms of $\beta^G(F)$:
\begin{introtheorem}\label{val}
Let $X$ be a $\Q$-Fano variety with $G\subset \aut(X)$ a group action on $X$.
\begin{enumerate}
\item The following are equivalent:
 	\begin{enumerate}
 		\item[(i)] $X$ is uniformly $G$-equivariantly K-stable;
 		\item[(ii)] there exists $0<\delta<1$, such that $\beta^G(F)\geq \delta j^G(F)$ for any finite-orbit prime divisor $F$ over $X$;
 		\item [(iii)] there exists $0<\delta<1$, such that $\beta^G(F)\geq \delta j^G(F)$ for any $G$-dreamy prime divisor $F$ over $X$.
 	\end{enumerate}
 \item The following are equivalent:
 	\begin{enumerate}
 		\item[(i)] $X$ is $G$-equivariantly K-semistable;
 		\item[(ii)] $\beta^G(F)\geq 0$ for any finite-orbit prime divisor $F$ over $X$;
 		\item [(iii)] $\beta^G(F)\geq 0$ for any $G$-dreamy prime divisor $F$ over $X$.
 	\end{enumerate}
 \item The following are equivalent:
 	\begin{enumerate}
 		\item[(i)] $X$ is $G$-equivariantly K-stable;
 		\item[(ii)] $\beta^G(F)> 0$ for any $G$-dreamy prime divisor $F$ 						over $X$.
 	\end{enumerate}
\end{enumerate}
\end{introtheorem}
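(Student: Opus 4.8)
The plan is to run the $G$-equivariant analogue of Fujita's argument from \cite{Fuj16}, organized around the dictionary between $G$-equivariant test configurations of $(X,-K_X)$ and $G$-invariant filtrations of the anticanonical section ring $R=\bigoplus_{k\ge 0}H^0(X,-kK_X)$. The formal starting point, parallel to \cite{Fuj16}, is to attach to each $G$-equivariant normal test configuration $(\X,\LL)$ a $G$-invariant, multiplicative, linearly bounded filtration of $R$, and to express both its Donaldson--Futaki invariant $\DF(\X,\LL)$ and its non-Archimedean $J$-functional as integrals of the volume functions $x\mapsto\vol_X(\OS_X(-K_X)\otimes\aid_x)$, where $\aid_x$ runs through the degree-wise base ideals of the filtration. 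This reduces each of the three stability notions to an inequality between such integrals, taken over all $G$-invariant filtrations.

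Next I would attach to each finite-orbit prime divisor $F$ a distinguished $G$-invariant filtration of $R$, namely the one cut out by the intersected ideals $\aid_x(G\cdot\ord_F)=\bigcap_{g\in G}\aid_x(g\cdot\ord_F)$; these form a genuine multiplicative filtration because the inclusion $\aid_x(w)\cdot\aid_y(w)\subseteq\aid_{x+y}(w)$ for a single valuation $w$ survives intersection over the finite orbit. A direct computation then identifies the Donaldson--Futaki invariant of this filtration with $\beta^G(F)$ and its $J$-functional with $j^G(F)$, the integral running up to the pseudoeffective threshold $\tau^G(F)$. When $F$ is moreover $G$-dreamy, finite generation of $\bigoplus_{k,j}H^0(X,-kK_X)\otimes\aid_j(G\cdot\ord_F)$ makes this filtration an honest $G$-equivariant test configuration via $\proj$ of the associated Rees algebra; in general I would approximate it by finitely generated $G$-invariant filtrations and pass to the limit. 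Testing $G$-equivariant (uniform/semi-)stability against these objects yields the implications (i)$\Rightarrow$(ii) in parts (1) and (2) and (i)$\Rightarrow$(ii) in part (3), while (ii)$\Rightarrow$(iii) in parts (1) and (2) is immediate since every $G$-dreamy divisor is of finite orbit.

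It remains to prove the reductions (iii)$\Rightarrow$(i) in parts (1) and (2) and (ii)$\Rightarrow$(i) in part (3). Here I would take an arbitrary nontrivial $G$-equivariant test configuration and run a $G$-equivariant MMP, the equivariant form of the special-degeneration technique of \cite{LX16}, to replace it by one dominated by the filtration $\aid_\bullet(G\cdot\ord_F)$ of a single $G$-dreamy divisor $F$, in such a way that $\DF$ can only decrease while $j^G$ stays controlled. The hypothesis $\beta^G(F)\ge\delta\,j^G(F)$ (resp.\ $\beta^G(F)\ge0$, resp.\ $\beta^G(F)>0$) then forces the corresponding inequality on $\DF$ for the original test configuration, giving uniform $G$-stability, $G$-semistability, and $G$-stability respectively; for part (3) one must additionally check that the reduction preserves nontriviality so that strict positivity is not lost. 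Throughout, continuity and concavity in $x$ of the functions $\vol_X(\OS_X(-K_X)\otimes\aid_x(G\cdot\ord_F))$, together with their semicontinuity under the degeneration, guarantee that extracting $F$ from the filtration and passing to the limit are legitimate.

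The main obstacle is precisely this equivariant reduction and finite-generation step. Because $G\cdot v=\inf_{g\in G}g\cdot v$ is only a pseudovaluation and not a valuation, the Okounkov-body and filtration estimates underlying \cite{Fuj16} do not apply verbatim; one must instead work throughout with the intersected base ideals $\aid_x(G\cdot\ord_F)$ and re-establish the needed convexity and semicontinuity of their volumes directly. The technically delicate heart is showing that an arbitrary $G$-equivariant test configuration degenerates $G$-equivariantly to one arising from a $G$-dreamy divisor without increasing $\DF$: this requires a version of the MMP that respects the $G$-action and simultaneously handles the finite orbit of the extracted divisor, and it is on this point that the whole argument rests.
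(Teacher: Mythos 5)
Your outline follows essentially the same route as the paper: test stability against the $G$-invariant filtration $\aid_\bullet(G\cdot\ord_F)$ of a finite-orbit divisor (approximated by flag ideals) for the directions (i)$\Rightarrow$(ii); show that a $G$-dreamy divisor produces an honest $G$-equivariant test configuration with $\DF=\beta^G/(-K_X)^n$ and $J^{\textrm{NA}}=j^G/(-K_X)^n$; and reduce arbitrary $G$-equivariant test configurations to ones with reduced, $G$-irreducible central fiber via an equivariant MMP for the converse directions. But there is a genuine gap at exactly the step you yourself call the one ``on which the whole argument rests'': you never establish that a $G$-equivariant MMP can be run at all. For an arbitrary group $G$ --- not assumed algebraic, possibly with infinitely many connected components --- equivariant contraction is not available off the shelf: one can only contract the $G$-orbit of an extremal ray if that orbit is finite, i.e.\ if the $G$-action on the N\'eron-Severi group factors through a finite group. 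Leaving this unresolved means the implications (iii)$\Rightarrow$(i) in parts (1) and (2) and (ii)$\Rightarrow$(i) in part (3) remain unproven; moreover, the direction (i)$\Rightarrow$(ii) also leans on this reduction, because it is what lets one replace $\DF$ by the Ding invariant (via the equivalence of Ding- and K-(semi)stability through weakly $G$-special test configurations), and it is the Ding invariant, not $\DF$, that behaves well in the limit over flag-ideal approximations.

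The paper closes this gap with Lemma \ref{GMMPtc}: for any $G$-equivariant test configuration $(\X,\LL)$ of a Fano $X$, the $G$-action on $N^1(\X)$ factors through a finite group. The proof passes to a flag-ideal semi-test configuration $p:\Y\to X\times\Aff^1$ dominating $\X$, splits $N^1(\Y)=p^\ast N^1(X\times\Aff^1)\oplus N'$ with $N'$ spanned by non-movable classes supported on the central fiber, and observes that $G$ acts through a finite group on the first summand because the Mori cone of the Fano $X$ is polyhedral, and on the second because $G$ permutes the finitely many irreducible components of $\Y_0$; injectivity of $N^1(\X)\to N^1(\Y)$ concludes. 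With this lemma in hand, each MMP step of \cite{LX14} runs $G$-equivariantly (Theorem \ref{GS}), and the rest of your plan goes through as in the paper (Theorems \ref{part1}, \ref{tF}, \ref{part2}, \ref{part3}). One further inaccuracy to correct: your first paragraph asserts that $\DF(\X,\LL)$ of an arbitrary $G$-equivariant test configuration is an integral of the volumes of the base ideals of its filtration; that expression is only valid for weakly $G$-special test configurations, which is precisely why both the paper and \cite{Fuj16} route the flag-ideal computation through the Ding invariant. Also note that no approximation of non-finitely-generated filtrations ``by passing to the limit'' is needed for part (3): if the test configuration induced by a $G$-dreamy $F$ happens to be trivial, then the volume integral vanishes and $\beta^G(F)=A_X(F)(-K_X)^n>0$ automatically, so strict positivity is never lost.
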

\begin{remark}
When $G$ is connected, we know that every finite-orbit prime divisor is $G$-invariant. Therefore, it is enough to only consider beta invariants of $G$-invariant divisors in Theorem \ref{val} in this case. A similar result is shown in \cite{golota2019delta} in terms of the stability threshold.
\end{remark}
\begin{remark}
When $G$ is finite, every prime divisor over $X$ is of finite orbit. Moreover, for $G$-invariant K-semistability, it is enough to consider beta invariants of $G$-invariant divisors over $X$. Indeed, based on the valuative criteria in Theorem \ref{val}, Liu and the author showed further in \cite{LZ20} that it is in fact enough to check all $G$-equivariant special test configurations for $G$-equivariant K-semistability. The klt central fiber of a $G$-equivariant special test configuration of $X$ is in particular reduced and irreducible, and induces a $G$-invariant divisorial valuation on $X$. The argument in \cite{Fuj16} therefore shows that it is enough to consider $G$-invariant divisors when $G$ is finite.
In general, when $G$ is not finite, all the prime divisors induced by weakly $G$-special test configurations (see Section \ref{eqKs} for definition) are still of finite orbit. Therefore, we are not losing any information in terms of test configurations and K-stability by focusing only on divisors of finite orbit.
\end{remark}

We can also characterize equivariant K-stability in terms of equivariant normalized volume of $G$-pseudovaluations. Normalized volume of $G$-pseudovaluations can be defined similarly as the normalized volume of usual valuations in \cite{Li18} and we will use the same notation. See Section \ref{pre} for more details.

Let $X$ be a $\Q$-Fano variety with $G$-action, denote by $Y=C(X,-K_X)$ the cone over $X$ and $o\in Y$ the vertex of the cone. Suppose $\pi:~Z=Bl_oY\to Y$ is the blow-up of $Y$ at $o$. Let $E$ be the exceptional divisor of the blow-up. Denote the divisorial valuation $\ord_E$ by $v_0$. Note that there is a natural $G$-action induced on the cone $Y$ and the blow-up $Z$. Since $v_0$ is a $G$-invariant divisorial valuation, we know that $v_0\in \val^G_{Y,o}\subset \Gval_{Y,o}$, where $\val^G_{Y,o}$ and $\Gval_{Y,o}$ refer to $G$-invariant valuations and $G$-pseudovaluations with center to be $o$ respectively. (See Section \ref{pre} for the definition of the center of a $G$-pseudovaluation.)

Under the above notation, we have the following characterization of  $G$-equivariant K-semistability compared to the results in \cite{Li17K,LL19,LX16}:
\begin{introtheorem}\label{norm}
$X$ is $G$-equivariantly K-semistable iff the normalized volume function $\widehat{\vol}_{Y,o}$ is minimized at $v_0$ among all finite-orbit $G$-pseudovaluations on $Y$ centered at $o$.
\end{introtheorem}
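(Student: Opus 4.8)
The plan is to reduce Theorem \ref{norm} to the valuative criterion for $G$-equivariant K-semistability in Theorem \ref{val}(2), by transporting the normalized-volume minimization problem on the cone $Y$ down to the $\beta^G$-invariants on $X$. All the constructions involved — the affine cone $Y=C(X,-K_X)$, its blow-up $Z=Bl_oY$, the canonical valuation $v_0=\ord_E$, and the fiberwise scaling $\C^*$-action on $Y$ — are compatible with the induced $G$-action and, crucially, with the operation of passing to $G$-pseudovaluations, since $\aid_x(G\cdot v)=\bigcap_{g\in G}\aid_x(g\cdot v)$ and both the log discrepancy $A_{Y,o}$ and the volume are computed from these $G$-stable ideal sheaves. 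First I would record that $\widehat{\vol}_{Y,o}$ is well defined on finite-orbit $G$-pseudovaluations and behaves, with respect to the cone construction, exactly as in the non-equivariant theory of \cite{Li17K,LL19,LX16}.

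The heart of the argument is an equivariant cone correspondence. To each finite-orbit prime divisor $F$ over $X$ I would associate a one-parameter family of cone-compatible (that is, $\C^*$-invariant) finite-orbit $G$-pseudovaluations $\{v_{F,t}\}_{t\geq 0}$ on $Y$ centered at $o$, built from $G\cdot\ord_F$ together with the canonical valuation $v_0$, with $v_{F,0}=v_0$. Using the identification of the graded pieces of $\OS_Y$ with $\bigoplus_k H^0(X,-kK_X)$, I would compute $A_{Y,o}(v_{F,t})$ and $\vol_Y(v_{F,t})$ in terms of $A_X(F)$ and the volume functions $\vol_X(\OS_X(-K_X)\otimes\aid_x(G\cdot\ord_F))$ that enter the definition of $\beta^G$. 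Minimizing the resulting expression for $\widehat{\vol}_{Y,o}(v_{F,t})$ over $t$, I expect the key equivalence
$$
\min_{t\geq 0}\widehat{\vol}_{Y,o}(v_{F,t})\ \geq\ \widehat{\vol}_{Y,o}(v_0)\quad\Longleftrightarrow\quad \beta^G(F)\geq 0,
$$
mirroring the scaling computation of the classical case. This gives at once the direction ``$v_0$ minimizes $\Rightarrow \beta^G(F)\geq0$ for every finite-orbit $F$,'' hence $G$-equivariant K-semistability by Theorem \ref{val}(2).

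For the converse I would need to show that the divisorial inputs $\beta^G(F)\geq 0$ already force $v_0$ to be a global minimizer among \emph{all} finite-orbit $G$-pseudovaluations centered at $o$, not merely those of cone type. Here I would adapt the reduction step of \cite{LX16}: using the scaling $\C^*$-action one degenerates an arbitrary finite-orbit $G$-pseudovaluation to a cone-compatible one without increasing $\widehat{\vol}_{Y,o}$, and then approximates cone-compatible $G$-pseudovaluations by those coming from finite-orbit divisors $F$ over $X$, invoking lower semicontinuity of the normalized volume. Combined with the displayed equivalence, this yields $\widehat{\vol}_{Y,o}(G\cdot v)\geq\widehat{\vol}_{Y,o}(v_0)$ for every finite-orbit $G$-pseudovaluation, completing the proof.

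I expect the main obstacle to be precisely this last reduction. Because a $G$-pseudovaluation is an infimum of genuine valuations over its orbit and is not itself a valuation, the standard tools — lower semicontinuity of $\widehat{\vol}$, degeneration to the initial term under the $\C^*$-action, and approximation by divisorial valuations — must be re-established in the pseudovaluation setting. The saving grace is that all of them rest on the $G$-stable ideals $\aid_x(G\cdot v)$, whose colengths and associated volumes vary in the same controlled way as in the valuation case; verifying that the degeneration and semicontinuity arguments of \cite{LX16} survive once rephrased through these ideals is the technical core of the proof.
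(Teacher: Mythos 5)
Your proposal takes essentially the same route as the paper: the paper's Proposition \ref{der} computes $\frac{d}{dt}\widehat{\vol}(G\cdot v_t)\big|_{t=0}=(n+1)\beta^G(F)$ along exactly your cone family $v_{F,t}$, deduces ``minimizer $\Rightarrow$ $G$-equivariantly K-semistable'' from Theorem \ref{val}(2) just as you do, and disposes of the converse by citing the same convexity/degeneration argument of Theorem 4.5 in \cite{LX16} that you invoke. The only differences are cosmetic: the paper records the infinitesimal derivative identity rather than your min-over-$t$ equivalence (which is that identity plus the convexity in $t$), and it is, if anything, terser than you are about re-establishing the \cite{LX16} reduction for finite-orbit $G$-pseudovaluations.
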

\begin{remark}\label{unimin}
Uniqueness of the minimizer of $\widehat{\vol}_{Y,o}$ among all valuations on $Y$ centered at $o$ was originally conjectured by Chi Li in \cite{Li18}. Assuming the conjecture, we know that if $Y$ admits a $G$-action, then the minimizer is necessarily $G$-invariant. As it is well known, this would immediately imply the equivalence between $G$-equivariant K-semistability and usual K-semistability by a similar argument as in the proof of Theorem E in \cite{LX16}. In particular, it would follow that it is enough to consider only $G$-invariant divisors and $G$-invariant valuations to check K-semistability.
\end{remark}

Recently after the paper is posted online, there are two relevant results concerning Remark \ref{unimin}. Xu and Zhuang confirmed Li's conjecture on the uniqueness of the minimizer of normalized volume  in \cite{XZ20}. Zhuang also showed in \cite[Theorem 1.2]{zhuang2020}, using a different approach from \cite{XZ20}, that it is enough to check invariant divisors for K-semistability, which is a key step in his proof of the equivalence between equivariant K-stability and usual K-stability.

\subsection*{Acknowledgements}
The author would like to thank his advisor Tommaso de Fernex for proposing the question that motivates the project and providing insightful thoughts throughout the project. He would also like to thank Harold Blum, Kento Fujita, Yuchen Liu, Ivan Cheltsov and Kewei Zhang for effective discussions. Suggestions and comments from the anonymous referees of the paper are also greatly appreciated.

\section{Pseudovaluations, normalized volumes and Equivariant K-stability}\label{pre}
We include in this section relevant equivariant version of notions about valuations and K-stability for reader's convenience.
\subsection{Valuations and pseudovaluations}
We recall first the definition of the center of a valuation.
A valuation $v$ on $\C(X)^\ast$ defines a valuation ring $\OS_v\subset \C(X)$ by
$$
\OS_v=\{f\in \C(X)^\ast|v(f)\geq 0 \}\cup \{0\}.
$$
We say $v$ has a center on $X$ if we have a map $\spec \OS_v\to X$ induced by the following diagram:
\begin{center}
\begin{tikzcd}
\spec \C(X) \arrow[d]\arrow[r] & X \arrow[d] \\
\spec \OS_v \arrow [ur,dotted]\arrow[r] & \spec \C.
\end{tikzcd}
\end{center}
In this case, we call $v$ a valuation on $X$. Note that if we assume $X$ is proper, then the map $\spec \OS_v\to X$ always exists and is unique. Let $\xi$ be the image of the unique closed point of $\spec \OS_v$ in $X$. Then we call the schematic point $\xi$ the center of $v$ on $X$ and denote it by $c_X(v)$. For a closed point $x\in X$, we denote by $\val_{X,x}$ all $\R$-valued valuations of $\C(X)$ centered at $x$.

For a variety $X$ with a group action $G$, we define $G$-pseudovaluations as follows.
\begin{definition}
Let $G$ be a group action on $X$ and $v$ a valuation on $X$. Define
$$
G\cdot v:=\inf_{g\in G} g\cdot v,
$$
where $g\cdot v$ is the valuation given by $g\cdot v(f)=v(f\circ g)$ for any $f\in \C(X)$. We call $G\cdot v$ a $G$-pseudovaluation and denote all $G$-pseudovaluations on $X$ by $\Gval_X$. The \emph{center} of $G\cdot v$ is defined to be the union of the centers of $g\cdot v$ for all $g\in G$. We say $G\cdot v$ (or simply~$v$ if the group action $G$ is clear from the context) is \emph{of finite orbit} if the orbit of $v$ under $G$-action is finite.
\end{definition}

\begin{remark}
Note that by \cite[Lemma 1.7]{BHJ17}, if $v_1$ and $v_2$ are two valuations of finite orbit such that the two induced $G$-pseudovaluations $G\cdot v_1$ and $G\cdot v_2$ are the same, then $v_1$ and $v_2$ are in the same orbit under $G$-action. For counterexamples when one of the valuation is not of finite orbit, see Example \ref{csgo}.
\end{remark}

\begin{remark}
In general, the center of a $G$-pseudovaluation $G\cdot v$ is simply a union of schematic points. It might not be closed nor irreducible. For a finite-orbit $G\cdot v$, the center of $G\cdot v$ is a finite union of $c_X(g\cdot v)$. If $x\in X$ is a closed $G$-invariant point, and the center of $g\cdot v$ is $x$ for all $g\in G$, we will simply say that $G\cdot v$ has center to be $x$ or is centered at $x$.
\end{remark}
\begin{remark}
In general, $G$-pseudovaluations are not valuations because they do not satisfy the product property. Indeed, for any $f,g\in \C(X)$, we only have
$$
G\cdot v(fg)\geq G\cdot v(f)+G\cdot v(g).
$$
If $U\subset X$ is an affine open set containing all the centers of the valuations  $g\cdot v$, then $G\cdot v$ induces a pseudovaluation on $\OS_X(U)$ in the sense of \cite{arcs}. When $G\cdot v$ is of finite orbit, we can always find such $U$. Note that pseudovaluations on an affine variety do not extend to its function field due to the lack of product property. In general there is not a clear way to define pseudovaluations on a projective variety.
\end{remark}

For a valuation $v$ on $X$ and a nonnegative real number $x$, the ideal sheaf $\aid_x(v)\subset \OS_X$ is defined as follows. For $U\subset X$ an open affine subset of $X$, if $U$ contains the center of $v$, then define
$$
\aid_x(v)(U)=\{f\in \OS_X(U)^\ast|v(f)\geq x\}\cup\{0\}.
$$
If $U$ does not contain the center of $v$, we set $\aid_x(v)(U)=\OS_X(U)$. For a $G$-pseudovaluation $G\cdot v$, and $x$ a nonnegative real number, we define the ideal sheaf $\aid_x(G\cdot v)$ to be
$$
\aid_x(G\cdot v)=\bigcap_{g\in G} \aid_x(g\cdot v).
$$

\subsection{Equivariant normalized volume}
Let $x$ be a $G$-invariant point on $X$. Denote by $\Gval_{X,x}$ all $G$-pseudovaluations centered at $x$. We can define the normalized volume $\widehat{\vol}$ on the $\Gval_{X,x}$ almost in the same way as normalized volume of usual valuations. First of all, for any $G$-pseudovaluation $G\cdot v$, we define the volume 
$$
\vol(G\cdot v)=\lim_{\lambda\to \infty}\frac{\dim_{\C}\OS_{X,x}/\aid_\lambda(G\cdot v)}{\lambda^n/n!}.
$$
Note that $A_X(g\cdot v)=A_X(v)$ for any $g\in G$, so we define the log discrepancy of $G\cdot v$ to be $A_X(v)$. Then the normalized volume of $G\cdot v$ is defined as
$$
\widehat{\vol}(G\cdot v)=A_X(v)^n\vol(G\cdot v).
$$

\subsection{Equivariant K-stability}\label{eqKs}
We first give the definition of equivariant test configuration.
\begin{definition}
Let $(X,L)$ be a polarized variety. A (semi-)test configuration $(\X,\LL)$ of $(X,L)$ with exponent $r$ consists of the following data:
\begin{enumerate}
\item a proper flat family $\pi:\X\to \Aff^1$,
\item an equivariant $\C^\ast$-action on $\pi:\X\to \Aff^1$, where $\C^\ast$ acts on $\Aff^1$ by multiplication in the standard way, and
\item a $\C^\ast$-equivariant line bundle $\LL$ on $\X$ which is $\pi$-relatively (semi-)ample,
\end{enumerate}
such that $(\X,\LL)|_{\pi^{-1}(\Aff^1\backslash \{0\})}$ is $\C^\ast$-equivariantly isomorphic to $(X \times (\Aff^1\backslash \{0\}),L^{\otimes r}_{\Aff^1\backslash \{0\}})$, where $L_{\Aff^1\backslash \{0\}}$ is the pull back of $L$ from $X$ to $X \times (\Aff^1\backslash \{0\})$.

In addition, let $G\subset \aut(X,L)$ be a group action on $(X,L)$. We say $(\X,\LL)$ is a $G$-equivariant test configuration if $G$ can be extended to an action on $(\X,\LL)$ such that it commutes with the $\C^\ast$-action on $(\X,\LL)$.

$(\X,\LL)$ is called a \emph{trivial} test configuration if $(\X,\LL)$ is $\C^\ast$-equivariantly isomorphic to $(X \times \Aff^1,L^{\otimes r}_{\Aff^1})$, where the $\C^\ast$-action on $X \times \Aff^1$ is trivial on the first component.
\end{definition}

In the rest of the paper, we will focus on $\Q$-Fano varieties with the polarization to be~$-K_X$ and a group action $G\subset \aut(X)$. By replacing~$-K_X$ with a sufficiently divisible multiple of itself, we may assume~$-K_X$ is already Cartier.
The definition of Donaldson-Futaki invariant for an equivariant test configuration of a $\Q$-Fano variety is the same as the usual one. We include a definition using intersection formula here which will come up in later computation.
\begin{definition}
Let $X$ be a $\Q$-Fano variety of dimension $n$. For any rational number $r$ such that $rK_X$ is Cartier, let $(\X,\LL)$ be a normal semi-test configuration of $(X,-rK_X)$. We can compactify the test configuration into a flat family $(\bar{\X},\bar{\LL})$ over $\PP^1$, such that over $\PP^1\backslash \{0\}$, the family $(\bar{\X},\bar{\LL})$ is $\C^\ast$-equivariantly isomorphic to $X\times \PP^1\backslash \{0\}$ with trivial $\C^\ast$-action on the fibers. Then we can define the Donaldson-Futaki invariant of $(\X,\LL)$ to be
\begin{equation}\label{int}
\DF(\X,\LL):=\frac{1}{(n+1)(-K_X)^n}\left(\frac{n}{r^{n+1}}\bar{\LL}^{n+1}+\frac{n+1}{r^n}\bar{\LL}^n\cdot K_{\bar{\X}/\PP^1}\right)
\end{equation}
\end{definition}

We also include the definition of $J^{\textrm{NA}}(\X,\LL)$ following \cite{Fuj16}, which can be viewed as the norm of $(\X,\LL)$ since $J^{\textrm{NA}}(\X,\LL)=0$ if and only if $(\X,\LL)$ is a trivial test configuration. 
Let
\begin{center}
\begin{tikzcd}
& \Y \arrow[dl,"p"']
	\arrow[dr,"q"] & \\
X\times \PP^1 \arrow [rr,dotted] & & \bar{\X}.
\end{tikzcd}
\end{center}
be a common resolution of $X\times \PP^1$ and $\bar{\X}$. We set
$$
\lambda_{\max}(\X,\LL):=\frac{p^\ast (-K_{X\times\PP^1/\PP^1})^n\cdot q^\ast \bar{\LL}}{(-K_X)^n},
$$
and define
$$
J^{\textrm{NA}}(\X,\LL):=\lambda_{\max}(\X,\LL)-\frac{\bar{\LL}^{n+1}}{(n+1)(-rK_X)^n}
$$
\begin{definition}\label{def:k-stab}
Let $X$ be a $\Q$-Fano variety with $G\subset \aut(X)$ a group action on $X$. We have the following three definitions of K-stability:
\begin{enumerate}
\item $(X,-K_X)$ is said to be $G$-equivariantly K-semistable if the Donaldson-Futaki invariant is nonnegative for all $G$-equivariant normal test configurations.
\item $(X,-K_X)$ is said to be $G$-equivariantly K-stable if the Donaldson-Futaki invariant is positive for all nontrivial $G$-equivariant normal test configurations. 
\item $(X,-K_X)$ is said to be uniformly $G$-equivariantly K-stable if there exists $0<\delta<1$ such that $\DF(\X,\LL)\geq \delta J^{\textrm{NA}}(\X,\LL)$ for all $G$-equivariant normal test configurations.
\end{enumerate} 
\end{definition}

\subsection{Equivariant MMP and weakly $G$-special test configuration}
In this section, we would like to produce a collection of equivariant test configurations that plays the same role as special test configurations for K-stability following the argument in \cite{LX14}. Indeed, we would like to run similar Minimal Model Program (MMP) steps $G$-equivariantly. For readers' convenience, we collect some useful facts about $G$-MMP as follows.

For a projective variety $X$ and a group action $G\subset \aut(X)$, we can run $G$-equivariant MMP if the action of $G$ on the N\'eron-Severi group factors through a finite group (\cite[cf. Example 2.21]{KM98}). Indeed, the equivariant Contraction Theorem involves contraction of the orbit of an extremal ray under $G$-action, which is possible since the orbit is finite in this case.

When $G$ is an algebraic group, let $G_0$ be the connected component of $G$ containing the identity element. Then $G/G_0$ is finite and the action of $G$ on the N\'eron-Severi group factors through $G/G_0$. Therefore, we can always run $G$-MMP.
For an arbitrary group with possibly infinitely many connected components, we have the following lemma which allows us to run $G$-MMP on $G$-equivariant test configurations. The key fact we use here is that a Fano variety is a Mori dream space and the Mori cone is polyhedral.

\begin{lemma}\label{GMMPtc}
Let $G<\aut(X)$ be a group action on a Fano variety $X$, and $(\X,\LL)$ be a $G$-equivariant test configuration of $(X,-K_X)$. Then the $G$-action on the N\'eron-Severi group $N^1(\X)$ factors through a finite group.
\end{lemma}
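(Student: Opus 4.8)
The plan is to reduce the claim about the (a priori infinite) group $G$ acting on $N^1(\X)$ to a statement about a finite group, by exploiting the fact that $X$, being Fano, is a Mori dream space so that $N^1(\X)$ is finite-dimensional and the effective/movable cone structure is polyhedral. The key observation is that any automorphism in the image of $G$ in $\mathrm{GL}(N^1(\X))$ must preserve a great deal of discrete structure, and a group of lattice automorphisms preserving a rational polyhedral cone (together with an ample or anticanonical class) is automatically finite.

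**First I would** set up the linear action. Since $(\X,\LL)$ is a $G$-equivariant test configuration, $G$ acts on $\X$ by automorphisms commuting with the $\C^\ast$-action, hence acts on the Néron--Severi space $N^1(\X)$ preserving the integral lattice $N^1(\X)_\Z$. **Next I would** identify the invariants this action must fix. The anticanonical class $-K_{\X/\Aff^1}$ (or the relevant $G$-linearized line bundle coming from $\LL$ together with $-K_X$) is canonically determined and hence $G$-invariant; more importantly, because $\X$ is a test configuration of the Fano variety $X$, its generic fiber is $X$ and I can use that $X$ is a Mori dream space to argue that $\X$ inherits a rational polyhedral structure on its cone of effective (or movable) divisors. **Then I would** invoke the rigidity of lattice automorphisms: the group of elements of $\mathrm{GL}(N^1(\X)_\Z)$ that preserve a full-dimensional rational polyhedral cone and fix an interior integral class (e.g.\ an ample class, whose $G$-orbit I can average over since the orbit of extremal rays is finite) is a finite group, because it is a discrete subgroup of the compact stabilizer of that class in $\mathrm{GL}(N^1(\X)_\R)$. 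This forces the image of $G$ in $\mathrm{GL}(N^1(\X))$ to be finite, which is exactly the assertion that the $G$-action factors through a finite group.

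**The hard part will be** making precise \emph{why} the $G$-action preserves a rational polyhedral cone with an interior fixed point, since $\X$ itself is only a test configuration and need not be Fano. I expect the argument to go through the central fiber or through the relative geometry over $\Aff^1$: the point is that the Mori cone of $\X$ (relative to the family structure) is rational polyhedral, so $G$ permutes finitely many extremal rays, whence the induced permutation action has finite image; equivalently, $G$ permutes a finite generating set of the monoid of effective classes. The subtlety is that one must ensure the finitely many extremal rays really are permuted by all of $G$ — this uses precisely that $X$ is a Mori dream space so that the relevant cone is finitely generated — and that the kernel of the permutation representation acts trivially on $N^1(\X)$, which follows because an element fixing every extremal ray and preserving the lattice and an interior polarization must be the identity on $N^1(\X)_\R$. **I would** therefore organize the proof as: (1) $G$ acts on the rational polyhedral cone $\overline{\mathrm{NE}}(\X)$; (2) $G$ permutes its finitely many extremal rays, giving a homomorphism to a finite symmetric group; (3) the kernel acts trivially on $N^1(\X)$ because it fixes a spanning set of classes up to scaling and preserves the lattice together with an ample class, forcing it to lie in the identity component of the stabilizer, which meets the discrete image trivially.
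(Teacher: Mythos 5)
Your proof has a genuine gap, and it is exactly at the point you flagged as ``the hard part'': step (1) of your outline, the claim that $\overline{NE}(\X)$ (equivalently $\overline{NE}(\X/\Aff^1)$, since every complete curve of $\X$ lies in a fiber over $\Aff^1$) is rational polyhedral. This is false for a general test configuration of a Fano variety, and your proposed fix --- that $X$ is a Mori dream space --- does not transfer to $\X$, because the Mori dream space property is not inherited by blow-ups with centers in the central fiber. Concretely, let $X=\PP^2$ and let $\Pi:\X\to \PP^2\times\Aff^1$ be the blow-up at ten very general points of $\PP^2\times\{0\}$, with $\LL=\Pi^\ast(-rK_{\PP^2\times\Aff^1/\Aff^1})-E$ for $r\gg 0$, where $E$ is the exceptional divisor. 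This is a normal test configuration of $(\PP^2,-K_{\PP^2})$ (the $\C^\ast$-action lifts since the blown-up points are fixed; it is $G$-equivariant for $G$ trivial, or for any finite subgroup of $\PGL(3)$ preserving the ten points). The strict transform $S$ of the central fiber is $\PP^2$ blown up at ten very general points, which carries infinitely many $(-1)$-curves. Restriction gives an identification $N^1(\X)\cong N^1(S)$ under which $\mathrm{Nef}(\X)=\mathrm{Nef}(S)$: nefness of $aH-\sum b_iE_i$ on $S$ already forces $a\geq 0$ and $b_i\geq 0$ (intersect with $\ell$ and with the exceptional curves $e_i$), which in turn gives nefness on the ten exceptional $\PP^2$'s and on the general fibers, and these surfaces contain all complete curves of $\X$. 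Since $\mathrm{Nef}(S)$ is not polyhedral, neither is $\mathrm{Nef}(\X)$, hence neither is its dual $\overline{NE}(\X)$. So there is no finite set of extremal rays for $G$ to permute, and steps (1)--(2) collapse. (Your side remark about averaging an ample class over its $G$-orbit is also circular --- it presupposes the finiteness being proved --- though no averaging is needed, as the class of $\LL$ is $G$-fixed by the definition of $G$-equivariance.)

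The paper's proof never touches the cone of curves of $\X$. It dominates $(\X,\LL)$ by a flag-ideal model $p:\Y\to X\times\Aff^1$, the normalized blow-up of a $G$-invariant flag ideal $\I$, with a $G$-equivariant morphism $\phi:\Y\to\X$. It then splits $N^1(\Y)=p^\ast N^1(X\times\Aff^1)\oplus N'$, where $N'$ is spanned by classes of divisors supported on $\Y_0$: the action on the first summand is finite because $NE(X)$ --- the Mori cone of the Fano variety $X$ itself, where polyhedrality genuinely holds --- is polyhedral, the action on $N'$ is finite because $G$ permutes the finitely many irreducible components of $\Y_0$, and finiteness on $N^1(\X)$ follows from injectivity of $\phi^\ast$. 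If you insist on working directly on $\X$, the one surviving piece of your step (3) can be upgraded to a proof needing no polyhedrality at all: $G$ preserves the lattice $N^1(\X)_\Z$ and the closed convex cone $\mathrm{Nef}(\X)$, which is salient (a class that is both nef and anti-nef is numerically trivial), and it fixes the class of $\LL$, which lies in the interior of $\mathrm{Nef}(\X)$ by the relative Kleiman criterion over the affine base; linear maps preserving a salient closed cone and fixing an interior point form a compact group, and a compact subgroup of $\mathrm{GL}(N^1(\X)_\Z)$ is finite. But that is a different argument from the one you wrote, whose backbone is the false finiteness of extremal rays of $\overline{NE}(\X)$.
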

\begin{proof}
By the construction of test configurations through flag ideals in \cite{Oda13a} and \cite{BHJ17}, we know that there exists a semi-test configuration $(\Y,\M)$ such that $p:\Y\to X\times \Aff^1$ is the normalized blow-up along the ideal sheaf 
$$
\I=I_r+I_{r-1}t+\cdots+I_1t^{r-1}+(t^r)\subset \OS_{X\times \Aff^1}.
$$
with
$$
\M=p^\ast (-K_{X\times \Aff^1/\Aff^1})-E,
$$
where $\OS_{\Y}(-E)=p^{-1}\I\cdot \OS_{\Y}$. For the map $\pi:\Y\to \Aff^1$, we know that $\M$ is relative semi-ample, and 
$$
\phi:\Y\to \X=\proj \bigoplus_{m\geq 0}\pi_\ast \M^m,
$$
with $\phi^\ast\LL=\M$.
Note that since $(\X,\LL)$ is $G$-equivariant, $\I$ is a $G$-invariant ideal with all $I_i$'s also G-invariant. Therefore, there is an induced $G$-action on $(\Y,\M)$. Next we show that the induced action of $G$ on $N^1(\Y)$ factors through a finite group. 

Indeed, for any divisor $D$ on $\Y$, the difference $D-p^\ast p_\ast D$ is  supported in the central fiber $\Y_0$. Therefore, we see that $N^1(\Y)=p^\ast N^1(X\times \Aff^1)\oplus N'$, where $N'$ is spanned by numerical equivalence class of non-movable divisors supported in $\Y_0$.
Note that the action of $G$ on $N^1(X\times \Aff^1)$ is induced by the action of $G$ on $N^1(X)$. Since the Mori cone $NE(X)$ is polyhedral, the action $G$ is finite on $N^1(X)$,  and hence is finite on $N^1(X\times \Aff^1)$. The action of $G$ on $\Y_0$ permutes irreducible components of $\Y_0$, so it is also finite on N'. Therefore, we get the conclusion.

Now since $\phi^\ast: N^1(\X)\to N^1(\Y)$ is an injection, the action of $G$ on $N^1(\X)$ also factors through a finite group.
\end{proof}

In order to state the equivariant version of Li-Xu's thoerem, We also recall the definition of $G$-irreducibility.
\begin{definition}
A scheme $X$ with a group action $G$ is called $G$-irreducible if the underlying topological space of $X$ cannot be written as the union of $X_1$ and $X_2$ with $X_1$ and $X_2$ two $G$-invariant proper closed subsets.
\end{definition}
Note that equivalently, $X$ is $G$-irreducible if $G$ acts transitively on the irreducible components of $X$.

\begin{theorem}\label{GS}
For any G-equivariant normal test configuration $(\X,\LL)/\Aff^1$ of $(X,-K_X)$, there exists a finite morphism $\phi:\Aff^1 \to \Aff^1$, a test configuration $(\X^s,-K_{\X^s})$ with the central fiber being reduced and $G$-irreducible and a both $\C^\ast$- and $G$-equivariant birational map  $\X^s\dashrightarrow \X\times_\phi \Aff^1$ over $\Aff^1$, such that for any $0\leq\delta\leq 1$, we have
$$
\DF(\X^s,-K_{\X^s})-\delta J^{\textrm{NA}}(\X^s,-K_{\X^s}) \leq \deg \phi \left( \DF(\X,\LL)-\delta J^{\textrm{NA}}(\X,\LL)\right).
$$
\end{theorem}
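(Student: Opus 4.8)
The plan is to follow the construction of special test configurations due to Li and Xu \cite{LX14}, but to carry out every birational modification $G$-equivariantly. The key enabling ingredient is Lemma \ref{GMMPtc}: since the induced $G$-action on $N^1(\X)$ factors through a finite group, the $G$-equivariant Minimal Model Program is available, each step contracting the (finite) $G$-orbit of an extremal ray rather than a single ray, exactly as in the equivariant Contraction Theorem \cite[Example 2.21]{KM98}. This is also the reason why the best one can hope for is a central fiber that is reduced and $G$-irreducible rather than genuinely irreducible: irreducible components lying in a single $G$-orbit cannot be separated by equivariant contractions.

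I would run the following sequence of steps, each a $G$-equivariant analogue of a step in \cite{LX14}. First, a finite base change and normalization: applying $\phi:\Aff^1\to\Aff^1$, $z\mapsto z^{\deg\phi}$, I pull $(\X,\LL)$ back to $\X\times_\phi\Aff^1$ and normalize, choosing $\deg\phi$ by semistable reduction so that the central fiber becomes reduced. The base change commutes with both the $\C^\ast$- and the $G$-action, so the resulting model remains $G$-equivariant, while both $\DF$ and $J^{\textrm{NA}}$ scale by $\deg\phi$; this accounts for the factor $\deg\phi$ on the right-hand side of the asserted inequality, and the remaining steps will only decrease $\DF-\delta J^{\textrm{NA}}$. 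Second, I run a $G$-equivariant relative MMP over $\Aff^1$ to reach a model on which the pair $(\X,\X_0)$ is log canonical, with $\X_0$ the reduced central fiber. Third, I continue with a $G$-equivariant MMP (with scaling) to contract the $G$-orbits of the remaining destabilizing components, arriving at a model whose central fiber is reduced and $G$-irreducible. Fourth, passing to the relative anticanonical model produces $(\X^s,-K_{\X^s})$, polarized by the now relatively ample $-K_{\X^s}$; this is the desired weakly $G$-special test configuration, and the induced birational map $\X^s\dashrightarrow\X\times_\phi\Aff^1$ is both $\C^\ast$- and $G$-equivariant by construction.

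The monotonicity of $\DF-\delta J^{\textrm{NA}}$ under each of these steps is the intersection-theoretic heart of the argument; using the formula \eqref{int} together with the expression for $J^{\textrm{NA}}$, this is precisely the computation carried out in \cite{LX14} and \cite{Fuj16}, and it transfers once one observes that every modification is $G$-equivariant and that the intersection numbers involved are summed over finite $G$-orbits. I expect the main obstacle to lie in verifying $G$-equivariance at every stage — that the contractions, flips, and the final anticanonical model can all be taken compatibly with the $G$-action — which is exactly what Lemma \ref{GMMPtc} supplies by reducing each contraction to the finite-group MMP of \cite{KM98}. A secondary point requiring care is to check that $G$-irreducibility, equivalently transitivity of $G$ on the components of the central fiber, is preserved under each contraction and at the anticanonical model, so that the output genuinely has reduced and $G$-irreducible central fiber.
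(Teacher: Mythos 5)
Your proposal is correct and follows essentially the same route as the paper: invoke Lemma \ref{GMMPtc} to run each step of the Li--Xu program from \cite{LX14} $G$-equivariantly (base change, lc modification, MMP with scaling contracting $G$-orbits of components, then the relative anticanonical model), and cite the intersection-theoretic computations of \cite{LX14} and \cite{Fuj16} for the monotonicity of $\DF-\delta J^{\textrm{NA}}$. The paper's own proof is just a compressed version of exactly this argument, so there is nothing to add beyond your (accurate) unpacking of the individual MMP steps.
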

\begin{proof}
By Lemma \ref{GMMPtc}, we can run a $G$-equivariant version of each MMP step in the proof of the main theorem in \cite{LX14}. Then we get a $G$-equivariant semi-test configuration $(\X^m,-K_{\X^m})$ such that the central fiber $\X^m_0$ is reduced and $G$-irreducible, and~$-K_{\X^m}$ is relatively semiample. Let $\X^s=\proj_{\Aff^1} R(\X^m,-K_{\X^m})$ be the relative anticanonical model. This yields the $G$-equivariant test configuration $(\X^s,-K_{\X^s})$. The computation in \cite{LX14} and \cite{Fuj16} gives us the inequality in terms of the Donaldson-Futaki invariant. 
\end{proof}
Note that according to the MMP steps in \cite{LX14}, we know that $(\X^m,\X_0^m)$ is dlt. However, after we take the relative anti-canonical model, the pair $(\X^s,\X_0^s)$ is only log canonical and not necessarily dlt any more. 
\begin{definition}
A $G$-equivariant normal test configuration $(\X,\LL)$ is called a weakly $G$-special test configuration if the central fiber $\X_0$ is reduced and $G$-irreducible.
\end{definition}
For a weakly $G$-special test configuration $(\X,\LL)$, since both $\LL$ and $K_{\X}$ are $G$-invariant, we know that $\LL+K_{\X}$ is a $G$-invariant divisor supported on the central fiber $\X_0$ which is $G$-irreducible. Therefore it can only be a multiple of the whole fiber $\X_0$. By definition of the Donaldson-Futaki invariant, we have $\DF(\X,\LL)=\DF(\X,-K_{\X})$ and $J^{\textrm{NA}}(\X,\LL)=J^{\textrm{NA}}(\X,-K_{\X})$. As in the usual K-stability case, we know from Theorem \ref{GS} that it is enough to check only weakly $G$-special test configurations for $G$-equivariant K-stability. 
\begin{remark}
Note that by the computation in \cite[Section 3]{Fuj16}, Theorem \ref{GS} also holds if we replace the Donaldson Futaki invariants with Ding invariants (see \cite[Definition 3.1]{Fuj15} for the definitoon of Ding invariant). Since these two invariants are the same for weakly $G$-special test configurations by \cite[Theorem 3.2]{Fuj15}, we know that we can also define various notions of K-stability using the Ding invariant $\Ding(\X,\LL)$ in place of the Donaldson Futaki invariant $\DF(\X,\LL)$ in Definition \ref{def:k-stab}. We will use this fact in the proof of Theorem \ref{part1}.
\end{remark}

\subsection{Filtrations and test configurations}
A test configuration $(\X,\LL)$ of $(X,-K_X)$ induces a filtration $\F$ on $V_k=H^0(X,-kK_X)$ in the following way:
$$
\F^xV_k=\{s\in V_k|t^{-\lceil x\rceil}\bar{s}\in H^0(\X,k\LL)\},
$$
where $\bar{s}$ is the $\C^\ast$-invariant section of $k\LL$ on $\X\backslash \X_0$ induced by $s$. Note that $\F$ is decreasing, left-continuous, multiplicative and linearly bounded (see \cite[Section 1.1]{BHJ17}  and \cite[Definition 4.1]{Fuj15} for definitions). Filtrations in this paper will always be assumed to satisfy these four properties.

Conversely, let $\F$ be a filtration on $V_\bullet$ such that $\bigoplus_{k\in \Z_{\geq 0},j\in \Z} \F^jV_k$ is finitely generated. We may assume it is generated in degree $k=1$. Then we can define a test configuration
$$
(\X,\LL)=\left(\proj_{\Aff^1}\bigoplus_{k\in \Z_{\geq 0},j\in \Z} t^{-j}\F^j V_k,\OS(1)\right),
$$ 
where the $\proj$ is taken with respect to the grading of $k$, and the $\Z$-grading of $j$ determines an equivariant $\C^\ast$-action on the test configuration over $\Aff^1=\spec\C[t]$. Note that the central fiber of the test configuration is 
$$
\X_0=\proj \bigoplus_{k\in \Z_{\geq 0},j\in \Z} \gr_{\F}^jV_k,
$$
where
$$
\gr_{\F}^jV_k=\frac{{\F}^jV_k}{{\F}^{j+1}V_k}
$$
is the graded piece of the filtration $\F^\bullet$. The induced $\C^\ast$-action from the test configuration on $\X_0$ is also determined by the $\Z$-grading of $j$. 
The following proposition gives the relation between filtrations and test configurations.
\begin{proposition}[Proposition 2.15, \cite{BHJ17}]\label{BHJ}
The above construction sets up a one-to-one correspondence between test configurations of $(X,-K_X)$ and finitely generated filtrations on $V_\bullet$.
\end{proposition}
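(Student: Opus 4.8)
The plan is to pass through the Rees algebra of a filtration, which interpolates between the generic fibre $X$ and the central fibre $\X_0$, and to check that the two assignments in the statement are inverse to one another. Write $R=\bigoplus_{k\geq0}V_k$ for the anticanonical section ring; since $X$ is normal and $-K_X$ is ample, $R$ is a finitely generated graded domain. Given a filtration $\F$, form its Rees algebra
$$
\mathcal{R}_\F:=\bigoplus_{k\geq0}\bigoplus_{j\in\Z}t^{-j}\F^jV_k\ \subset\ R\otimes_\C\C[t,t^{-1}],
$$
a graded $\C[t]$-subalgebra, and set $(\X,\LL):=(\proj_{\Aff^1}\mathcal{R}_\F,\OS(1))$. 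First I would verify this is a test configuration. Multiplication by $t$ sends $t^{-j}\F^jV_k$ into $t^{-(j-1)}\F^{j-1}V_k$ because $\F$ is decreasing, so $\mathcal{R}_\F$ is genuinely a $\C[t]$-algebra; as $R[t,t^{-1}]$ is a domain, $t$ is a non-zero-divisor, so $\mathcal{R}_\F$ is torsion-free and hence flat over the principal ideal domain $\C[t]$.

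Continuing the axiom check: inverting $t$ recovers $R\otimes\C[t,t^{-1}]$, where the only nontrivial inclusion uses linear boundedness to force $\F^jV_k=V_k$ for $j\ll0$, so that $s=t^j(t^{-j}s)\in\mathcal{R}_\F[t^{-1}]$; thus the restriction over $\Aff^1\setminus\{0\}$ is $X\times(\Aff^1\setminus\{0\})$ with the trivial family structure. The $\Z$-grading in $j$ supplies the $\C^\ast$-action commuting with $\pi$, reducing $\mathcal{R}_\F$ modulo $t$ yields the central fibre $\proj\bigoplus_{k,j}\gr_\F^jV_k$, and $\OS(1)$ is relatively ample with $\X\to\Aff^1$ proper by the standard theory of relative $\proj$ of a finitely generated graded algebra. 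Finally, $\mathcal{R}_\F$ is a finitely generated $\C[t]$-algebra precisely when $\bigoplus_{k,j}\F^jV_k$ is finitely generated, which is the hypothesis on $\F$; after rescaling we may assume generation in degree one as in the text.

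The remaining task is to see that the two constructions are mutually inverse. Starting from a finitely generated $\F$ and forming $(\X,\LL)$ as above, the degree-$k$ piece of $\mathcal{R}_\F$ is $H^0(\X,k\LL)=\bigoplus_j t^{-j}\F^jV_k$, so the defining condition $t^{-\lceil x\rceil}\bar s\in H^0(\X,k\LL)$ of the induced filtration is equivalent to $s\in\F^{\lceil x\rceil}V_k$, and left-continuity returns $\F$ itself. Conversely, for a test configuration $(\X,\LL)$ with $\LL$ relatively ample one has $\X=\proj_{\Aff^1}\bigoplus_k H^0(\X,k\LL)$; decomposing each $H^0(\X,k\LL)$ into $\C^\ast$-weight spaces, the weight decomposition recovers the pieces $t^{-j}\F^jV_k$ of the induced filtration $\F$, so the section ring is the Rees algebra $\mathcal{R}_\F$ and the relative $\proj$ reconstruction is $\C^\ast$-equivariantly isomorphic to $(\X,\LL)$. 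I expect the main obstacle to be precisely this last identification: matching the $\C^\ast$-weight decomposition of $\bigoplus_k H^0(\X,k\LL)$ with the index $j$ of the Rees algebra, using relative ampleness both to recover $\X$ as a relative $\proj$ and to guarantee that the induced filtration is automatically finitely generated, while the bookkeeping between the real parameter $x$ (through $\lceil x\rceil$) and the integer grading must be handled carefully so that the round trip returns $\F$ rather than a reindexing of it.
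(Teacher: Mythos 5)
Your overall strategy --- the Rees algebra $\mathcal{R}_\F$, flatness via torsion-freeness over $\C[t]$, trivialization over $\Aff^1\setminus\{0\}$ from linear boundedness, the central fibre as $\proj$ of the associated graded, and recovery of an ample test configuration as $\proj_{\Aff^1}\bigoplus_k H^0(\X,k\LL)$ with the $\C^\ast$-weight decomposition giving back a Rees algebra --- is exactly the mechanism behind the cited result; the paper itself gives no proof, deferring to \cite{BHJ17}, and the direction ``test configuration $\to$ filtration $\to$ test configuration equals the identity'' is essentially complete as you sketch it (flatness makes $t$ a non-zero-divisor, so sections inject into $V_k\otimes\C[t,t^{-1}]$ and decompose into weight spaces).

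The genuine gap is precisely the step you flagged and then asserted: that the degree-$k$ piece of $\mathcal{R}_\F$ equals $H^0(\X,k\LL)$ for \emph{every} $k$. For a finitely generated graded $\C[t]$-algebra $S$, the natural map $S_k\to H^0(\proj S,\OS(k))$ is an isomorphism only for $k\gg 0$, and Rees algebras of finitely generated multiplicative filtrations genuinely fail to be saturated in low degrees. Concretely, take $X=\PP^1$, $V_k=H^0(\PP^1,\OS(2k))$ (so $-K_X=\OS(2)$), let $W=\langle x^4,x^3y,xy^3,y^4\rangle\subset V_2$, and let $\F$ be the smallest multiplicative filtration with $\F^1V_2=W$, $\F^jV_2=0$ for $j\geq 2$, $\F^jV_1=0$ for $j\geq 1$, and $\F^jV_k=V_k$ for $j\leq 0$; its Rees algebra is $\C[t][V_1,t^{-1}W]$, hence finitely generated. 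A direct check gives $W\cdot W=V_4$ and $W\cdot V_1=V_3$, from which $\F^jV_k=V_k$ for every $j\leq\lfloor k/2\rfloor$ \emph{except} $(j,k)=(1,2)$, and $\F^jV_k=0$ for $j>\lfloor k/2\rfloor$. Thus $\mathcal{R}_\F$ agrees in every degree $k\neq 2$ with the Rees algebra of the filtration $\F'$ defined by $\F'^jV_k=V_k$ for $j\leq\lfloor k/2\rfloor$ and $0$ otherwise, so the two finitely generated filtrations $\F\neq\F'$ have the same relative $\proj$ and the same $\OS(1)$, i.e.\ the same test configuration; and since $H^0(\X,2\LL)$ contains the degree-$2$ piece of $\mathcal{R}_{\F'}$, the induced filtration of that test configuration satisfies $\F^1_{(\X,\LL)}V_2=V_2\supsetneq W$. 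So the composite ``filtration $\to$ test configuration $\to$ filtration'' is a saturation operator rather than the identity, and the map from finitely generated filtrations to test configurations is not injective. The statement therefore needs a normalization your proof does not supply: either identify filtrations that agree in all sufficiently large degrees (equivalently, pass to a Veronese $V_{r\bullet}$ with $r\gg 0$, after which the Rees algebra is saturated in all positive degrees), or restrict to saturated filtrations, i.e.\ those actually arising from test configurations. This caveat is invisible in the paper's applications, since $\DF$, $J^{\textrm{NA}}$ and $\beta^G$ depend only on the asymptotic behavior of the filtration, but without it the bijectivity claim as you argue it is false.
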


For any prime divisor $F$ over $X$, we can construct a $G$-invariant filtration
\begin{equation}\label{filt}
\F^x V_r=
\begin{cases}
H^0\left(X,\OS_X(-rK_X)\otimes\aid_{\lceil x\rceil}(G\cdot \ord_F)\right),&~x\geq 0,\\
V_r,&~x	<0.
\end{cases}
\end{equation}
which induces a $G$-equivariant test configuration.

To conclude this section, we look at some basic examples that illustrate the difference between $G$-equivariant K-stability and usual K-stability.
\begin{example}
Consider the projective space $X=\PP^n$ with $G=\PGL(n+1)$-action. A nontrivial $\C^\ast$-action on $\PP^n$ does not commute with $\PGL(n+1)$. Then the only $G$-equivariant test configuration of $(\PP^n,-K_{\PP^n})$ is the trivial test configuration. By definition we know that $\PP^n$ is uniformly $G$-equivariantly K-stable. Note that for any $G$-pseudovaluation $G\cdot v$, we have that $\aid_x(G\cdot v)=(0)$ for any $x>0$. Therefore for any prime divisor $F$ over $\PP^n$, we know that the corresponding $G$-invariant filtration is
$$
\F^x V_r=
\begin{cases}
0,&~x> 0,\\
V_r,&~x \leq 0,
\end{cases}
$$
which induces exactly the trivial test configuration.
\end{example} 

\begin{example}\label{csgo}
Consider $X=\PP^1\times\PP^1$ with $G=\PGL(2)$ acting on the first component. Pick any point $p\in X$. Let $E$ be the exceptional divisor of the blow-up of $X$ at $p$. Let $H$ be the horizontal line through $p$, and we know that $H$ is the orbit of $p$ under $G$-action. Therefore $E$ and $H$ induce the same $G$-invariant filtration. Note that $E$ is not of finite orbit, but $H$ is $G$-invariant. The compactified test configuration corresponding to the $G$-invariant filtration is $\pi:~\PP^1\times \FF_1\to \PP^1$, with $G$ acting on the first component and $\pi$ induced by the Hirzebruch surface $\FF_1\to \PP^1$. 

Similar examples can also be constructed in the same way for other group action such as a torus action $(\C^\ast)^r$ on $\PP^n$.
\end{example}

\begin{example}
Cheltsov and Shramov studied a special class $V^\ast_{22}$ of Fano threefolds of degree 22 admitting $(\C^\ast\rtimes \Z/2)$-action. Denote $G=\C^\ast\rtimes \Z/2$. They computed in \cite{CS18} the equivariant alpha invariant $\alpha_G(X)$ for every smooth Fano threefold of type $V^\ast_{22}$, and showed that all but two of them have $\alpha_G(X)=4/5$ (see Section \ref{sec:tian} for the definition of equivariant alpha invariant). Then by \cite[Theorem 1.10]{Al} (see also Theorem \ref{eqal} in Section \ref{sec:tian}), we know that these Fano threefolds with $\alpha_G(X)=4/5$ are uniformly $G$-equivariantly K-stable. However, since the automorphism groups contain $\C^\ast$ and hence are not discrete, they are not uniformly K-stable in the usual sense.
\end{example}

\section{Equivariant valuative criteria}
We separate the proof of Theorem \ref{val} into 3 parts. We first prove the following theorem which gives a necessary valuative condition of uniform equivariant K-stability in Theorem \ref{val}.
\begin{theorem}\label{part1}
Let $X$ be a $\Q$-Fano variety with $G\subset \aut(X)$ a group action on $X$. If $X$ is uniformly $G$-equivariantly K-stable, then there exists $0<\delta<1$, such that $\beta^G(F)\geq \delta j^G(F)$ for any finite-orbit prime divisor $F$ over $X$.
\end{theorem}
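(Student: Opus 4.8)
The plan is to reduce the statement to a uniform \emph{Ding} inequality, feed in the $G$-invariant filtration attached to $F$, and then pass to a limit of genuine test configurations. First I would upgrade the hypothesis from the Donaldson--Futaki invariant to the Ding invariant: I claim that uniform $G$-equivariant K-stability forces
$$
\Ding(\X,\LL)\geq \delta\, J^{\textrm{NA}}(\X,\LL)
$$
for \emph{every} $G$-equivariant normal test configuration, with the same $\delta$. This follows from the Ding-invariant version of Theorem \ref{GS} (available by the remark following the definition of weakly $G$-special test configurations): given $(\X,\LL)$, degenerate it to a weakly $G$-special $(\X^s,-K_{\X^s})$, so that
$$
\Ding(\X^s,-K_{\X^s})-\delta\, J^{\textrm{NA}}(\X^s,-K_{\X^s})\leq \deg\phi\left(\Ding(\X,\LL)-\delta\, J^{\textrm{NA}}(\X,\LL)\right).
$$
On the weakly $G$-special model $\Ding=\DF$, and $\DF(\X^s,-K_{\X^s})\geq \delta\, J^{\textrm{NA}}(\X^s,-K_{\X^s})$ by hypothesis, so the left-hand side is nonnegative; since $\deg\phi>0$, so is the right-hand side.

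Next I would attach to $F$ the $G$-invariant filtration $\F$ of \eqref{filt} and set up the dictionary relating its non-Archimedean invariants to $\beta^G(F)$ and $j^G(F)$. Because $F$ is of finite orbit, each $\aid_x(G\cdot\ord_F)$ is a coherent ideal, being a finite intersection, and $\F$ is decreasing, multiplicative, left-continuous and linearly bounded; its volume at slope $x$ is exactly $\vol_X(\OS_X(-K_X)\otimes\aid_x(G\cdot\ord_F))$. Running the intersection-theoretic computation of \cite{Fuj16} verbatim, but with these $G$-invariant ideals in place of the ordinary ones, I would show that the test configuration attached to a \emph{finitely generated} filtration of this type satisfies $\Ding = \beta^G(F)/(-K_X)^n$ and $J^{\textrm{NA}} = j^G(F)/(-K_X)^n$; here $\lambda_{\max}=\tau^G(F)$ and the $\bar{\LL}^{n+1}$ term reproduces $\frac{1}{(-K_X)^n}\int_0^{\tau^G(F)}\vol_X(\OS_X(-K_X)\otimes\aid_x(G\cdot\ord_F))\,dx$.

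The filtration $\F$ need not be finitely generated, so by Proposition \ref{BHJ} it need not itself correspond to a test configuration. I would therefore approximate $\F$ by the subfiltrations $\F_{(m)}$ generated by $\F$ in degrees $\leq m$. Each $\F_{(m)}$ is finitely generated and remains $G$-invariant, since a product of $G$-invariant pieces is $G$-invariant, so after normalization it yields a $G$-equivariant normal test configuration $(\X_m,\LL_m)$. Applying Step 1 gives $\Ding(\X_m,\LL_m)\geq \delta\, J^{\textrm{NA}}(\X_m,\LL_m)$ for every $m$. Letting $m\to\infty$ and using continuity of the volume function to pass both invariants to the limit, the dictionary above yields $\beta^G(F)/(-K_X)^n\geq \delta\, j^G(F)/(-K_X)^n$, that is $\beta^G(F)\geq \delta\, j^G(F)$.

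The main obstacle I expect is the combination of this approximation with the precise dictionary: verifying that the $\Ding$ and $J^{\textrm{NA}}$ invariants of the finitely generated approximants $\F_{(m)}$ converge to the integral expressions $\beta^G(F)/(-K_X)^n$ and $j^G(F)/(-K_X)^n$. This rests on the continuity of $x\mapsto\vol_X(\OS_X(-K_X)\otimes\aid_x(G\cdot\ord_F))$ (via Okounkov bodies) and on checking that the finite-orbit hypothesis on $F$ is exactly what keeps all the sheaves coherent and the whole construction $G$-equivariant through normalization and the $\proj$ construction.
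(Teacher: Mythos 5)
Your overall skeleton matches the paper's proof: upgrade uniform $G$-equivariant K-stability to a uniform \emph{Ding} inequality via the Ding version of Theorem \ref{GS} (your Step 1 is exactly the argument the paper's remark before the filtration subsection foreshadows), attach to $F$ the $G$-invariant filtration \eqref{filt}, approximate it by finitely generated $G$-invariant data to produce $G$-equivariant (semi-)test configurations, apply the uniform Ding inequality to each, and pass to the limit following the computation of Theorem 4.1 in \cite{Fuj16}. The only cosmetic difference is the approximating device: the paper uses Fujita's flag ideals $\I_r$ built from the base ideals $I_{(r,x)}$ of the degree-$r$ pieces (blown up on $X\times\Aff^1$), while you use the Rees construction on the truncated filtrations $\F_{(m)}$; these are essentially interchangeable.

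There is, however, a genuine gap in your limiting step. First, the exact dictionary you assert for the approximants --- $\Ding(\X_m,\LL_m)=\beta^G(F)/(-K_X)^n$ and $J^{\textrm{NA}}(\X_m,\LL_m)=j^G(F)/(-K_X)^n$ --- is false: these identities hold precisely when the filtration \eqref{filt} itself is finitely generated (the $G$-dreamy case, which the paper treats separately and which yields a weakly $G$-special test configuration), not for the truncations $\F_{(m)}$, whose invariants can only be expected to converge to those quantities. Second, and more seriously, that convergence cannot be extracted from ``continuity of the volume function'' alone. The Ding invariant is not a pure intersection number: it contains a log canonical threshold term, and the heart of Fujita's computation (which the paper imports wholesale) is the asymptotic upper bound of this lct term by $A_X(F)$. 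That bound requires choosing a $G$-equivariant resolution $\pi:Y\to X$ on which the orbit $F_1,\ldots,F_N$ of $F$ consists of actual divisors, and using the containment $\I_r\cdot\OS_{Y\times\Aff^1}\subset\left(\OS_Y\left(-\sum_{i=1}^N F_i\right)+(t)\right)^{re_+}$, where it is crucial that $\sum_{i=1}^N F_i$ is \emph{reduced}. Your proposal never sets up this resolution; indeed you locate the role of the finite-orbit hypothesis only in the coherence of the ideals $\aid_x(G\cdot\ord_F)$ and in equivariance of the construction, whereas the paper points out (in the remark following its proof) that the real reason finite orbit is needed is the existence of this $G$-equivariant resolution. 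Without that step, the inequality $\limsup_m\Ding(\X_m,\LL_m)\leq\beta^G(F)/(-K_X)^n$ --- which is the direction you must combine with $\Ding(\X_m,\LL_m)\geq\delta J^{\textrm{NA}}(\X_m,\LL_m)$ to conclude $\beta^G(F)\geq\delta j^G(F)$ --- remains unproven, and volume continuity (which does handle the $J^{\textrm{NA}}$ side and the self-intersection term) cannot supply it.
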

\begin{proof}
We follow the idea of the proof of Theorem 4.1 in \cite{Fuj16}. For simplicity, We may assume $-K_X$ is already Cartier. Given any prime divisor $F$ of finite orbit, let $\pi:Y\to X$ be a $G$-equivariant resolution such that $F$ is a smooth divisor on $Y$. Following the notation in \eqref{filt}, we consider the $G$-invariant filtration of $\F^x V_r$ defined by $F$. Note that $\F$ is saturated.
Let
$
I_{(r,x)}:=\im (\F^x V_r\otimes\OS_X(rK_X)\to \OS_X)
$
be the base ideal of $\F^x V_r$. Suppose $F_1,\ldots,F_N$ form the orbit of $F$ under the $G$-action with $F_1=F$. We have 
$$I_{(r,x)}\cdot \OS_Y\subset \OS_Y\left(-\lceil x\rceil \sum_{i=1}^N F_i\right).
$$
Fix a sufficiently large integer $e_+>\tau^G(F)$ and sufficiently small $e_-<0$. We set the flag ideal $\I_r\subset \OS_{X\times \Aff^1}$ to be
$$
\I_r:=I_{(r,re_+)}+I_{(r,re_+-1)}t+\cdots+I_{(r,re_-+1)}t^{r(e_+-e_-)-1}+\left(t^{r(e_+-e_-)}\right).
$$
Let $\Pi_r:\X_r\to X\times \Aff^1$ be the blow-up along $\I_r$, and $E_r$ the exceptional divisor. Set $\LL_r:=\Pi_r^\ast(-K_{X\times\Aff^1})-\frac{1}{r}E_r$. Note that all $I_{(r,x)}$'s are $G$-invariant, so $\I_r$ is a $G$-invariant ideal, and we get that $(\X_r,\LL_r)$ is a $G$-equivariant semi-test configuration. Since $X$ is uniformly $G$-equivariantly K-stable, we know that $\Ding(\X_r,\LL_r)\geq \delta J^{NA}(\X_r,\LL_r)$ for some $\delta\in (0,1)$ independent of $r$.

Note that 
$$
\I_r\cdot \OS_{Y\times \Aff^1}\subset \left(\OS_Y\left(-\sum_{i=1}^N F_i\right)+(t)\right)^{re_+}.
$$
Now if we follow the same computation of $\Ding(\X_r,\LL_r)$  and take $r\to \infty$ as in the proof of Theorem 4.1 in \cite{Fuj16}, we will get $\beta^G(F)\geq \delta j^G(F)$.
\end{proof}

\begin{remark}
When $F$ is not of finite orbit, it is in general not possible to find a $G$-equivariant resolution $Y\to X$ as in the above proof. Therefore the assumption that $F$ is of finite orbit is necessary for the proof. For a finite-orbit prime divisor $F$, we can write the equivariant beta invariant of $F$ in the following way
$$
\beta^G(F):=A_X(F)(-K_X)^n-\int_0^{+\infty}\vol_Y\left(\pi^\ast (-K_X)-\sum_{i=1}^n F_i\right)\,dx.
$$
where $\pi:Y\to X$ is a $G$-equivariant resolution of $X$, and $F_i$'s form the orbit of $F$ as in the above proof. It is crucial in the computation that the sum $\sum F_i$ we consider here is a reduced divisor. If we instead  take the sum of all $gF$ for $g\in G$, it can be a nonreduced divisor. 
\end{remark}

Next we study the relation between Donaldson-Futaki invariants of weakly $G$-special test configurations and equivariant beta invariants.

\begin{theorem}\label{tF}
Let $(\X,\LL)$ be a nontrivial weakly $G$-special test configuration. Suppose the central fiber of $\X$ can be decomposed into irreducible components $\X_0^1,\ldots,\X_0^N$. Let $v_i$ be the restriction on $X$ of the divisorial valuation $\ord_{\X_0^{i}}$. Then each $v_i$ is a divisorial valuation and is $G$-dreamy. Moreover, we have $\DF(\X,\LL)=\beta^G(v_i)/(-K_X)^n$ and $J^{\textrm{NA}}(\X,\LL)=j^G(v_i)/(-K_X)^n$ for any $i$.
\end{theorem}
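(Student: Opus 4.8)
The plan is to run Fujita's intersection-theoretic argument from \cite{Fuj16}, using the $G$-irreducibility of the central fiber to replace the single divisorial valuation of the ordinary special-test-configuration case by the finite $G$-orbit $\{v_1,\dots,v_N\}$, whose infimum is the $G$-pseudovaluation $G\cdot v_i$. The first thing to record is the orbit structure. Since $\X_0$ is $G$-irreducible, $G$ acts transitively on $\{\X_0^1,\dots,\X_0^N\}$, so each $g\in G$ sends $\X_0^i$ to some $\X_0^{\sigma_g(i)}$ and hence $g\cdot v_i=v_{\sigma_g(i)}$. Thus the orbit of each $v_i$ is exactly $\{v_1,\dots,v_N\}$, which is finite, and $G\cdot v_i=\inf_{g}g\cdot v_i=\min_j v_j$ is the \emph{same} $G$-pseudovaluation for every $i$. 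Consequently $\aid_x(G\cdot v_i)=\bigcap_j\aid_x(v_j)$, and $\beta^G(v_i)$, $\tau^G(v_i)$, $j^G(v_i)$ all depend only on $G\cdot v_i$; so they are independent of $i$, and it suffices to establish the two equalities for one fixed $i$.

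Next I would treat divisoriality and $G$-dreaminess. Because $\X_0$ is reduced we have $\ord_{\X_0^i}(t)=1$, and the restriction $v_i=\ord_{\X_0^i}|_{\C(X)}$ is then divisorial by the standard analysis of valuations induced by components of central fibers of normal test configurations (cf. \cite{BHJ17}). Finite-orbitness having been shown above, for $G$-dreaminess it remains to prove finite generation of $\bigoplus_{k,j\geq0}H^0(X,\OS_X(-kK_X)\otimes\aid_j(G\cdot v_i))$. The key point is that this graded ring is precisely $\bigoplus_{k,j\geq0}\F^jV_k$ for the filtration $\F$ attached to $F=v_i$ in \eqref{filt}, and that $\F$ coincides with the filtration induced by the test configuration $(\X,\LL)$: a section $s\in V_k$ satisfies $t^{-m}\bar s\in H^0(\X,k\LL)$ exactly when $\ord_{\X_0^j}(s)\geq m$ for every $j$, i.e. when $s\in\aid_m(G\cdot v_i)$. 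Here the reducedness of $\X_0$ and the $G$-invariance of $\LL=-K_\X$ are what make the vanishing conditions symmetric across the orbit. Since $(\X,\LL)$ is a genuine test configuration, its filtration is finitely generated by Proposition \ref{BHJ}, yielding the desired finite generation.

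Finally, the numerical identities. As noted before the definition of weakly $G$-special test configurations, for such a configuration $\LL=-K_\X$ up to a multiple of the whole central fiber, which alters neither $\DF$ nor $J^{\textrm{NA}}$, so I may compute with $(\X,-K_\X)$. I then run Fujita's computation on the compactification $(\bar\X,\bar\LL)$: expressing $\bar\LL^{\,n+1}$ through the Okounkov-body volume of the filtration $\F$ produces the term $\int_0^\infty\vol_X(\OS_X(-K_X)\otimes\aid_x(G\cdot v_i))\,dx$, while the relative-canonical term $\bar\LL^{\,n}\cdot K_{\bar\X/\PP^1}$, combined with the log-discrepancy identity (the log discrepancy of $G\cdot v_i$ is $A_X(v_i)$, valid since $\ord_{\X_0^i}(t)=1$), produces $A_X(v_i)(-K_X)^n$; assembling \eqref{int} gives $\DF(\X,\LL)=\beta^G(v_i)/(-K_X)^n$. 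For $J^{\textrm{NA}}$ the same filtration computation identifies $\lambda_{\max}(\X,\LL)$ with $\tau^G(v_i)$ and the $J$-integral with $\int_0^{\tau^G(v_i)}(\vol_X(-K_X)-\vol_X(\OS_X(-K_X)\otimes\aid_x(G\cdot v_i)))\,dx=j^G(v_i)$, giving $J^{\textrm{NA}}(\X,\LL)=j^G(v_i)/(-K_X)^n$.

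The main obstacle is the passage from Fujita's single-divisor intersection computation to the present $G$-orbit situation: one must verify that the scheme-theoretic data on $\bar\X$ — in particular the reduced central fiber $\sum_j\X_0^j$, rather than any weighted sum of components — feeds into the volume and intersection asymptotics exactly through the combined ideal $\bigcap_j\aid_x(v_j)=\aid_x(G\cdot v_i)$, so that the equivariant ideals replace Fujita's single ideal without introducing spurious orbit-size factors. Checking this compatibility — equivalently, that the filtration of $(\X,\LL)$ really is the one in \eqref{filt} and that its Okounkov volume computes $\bar\LL^{\,n+1}$ in the equivariant setting — is the crux; once it is in place, the remainder is Fujita's calculation essentially verbatim.
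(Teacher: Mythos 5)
Your outline follows the same route as the paper (transitivity of the $G$-action on the components, divisoriality via \cite{BHJ17}, identification of the filtration induced by the test configuration, then Fujita's intersection computation), but the step you yourself call the crux is false as stated, and the error is precisely a missing log-discrepancy shift. For $\LL=-K_{\X/\Aff^1}$, a section $s\in V_k$ satisfies $t^{-m}\bar s\in H^0(\X,k\LL)$ if and only if $v_j(s)\geq m+kA_X(F_j)$ for every $j$, \emph{not} $v_j(s)\geq m$. This is the paper's formula
$$
\F_{\Y}^xV_k=\{f\in V_k\mid v_i(f)\geq kA_X(F_i)+x,\ i=1,\ldots,N\},
$$
obtained on a common resolution because $q^\ast\LL$ and $p^\ast(-K_{X\times\Aff^1/\Aff^1})$ differ by a divisor supported on the central fiber, whose coefficient along the strict transform of $\X_0^j$ is governed by $A_{X\times\Aff^1}(\ord_{\X_0^j})=A_X(v_j)+1$. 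Consequently the filtration \eqref{filt} is \emph{not} the filtration of $(\X,-K_{\X/\Aff^1})$; it is the filtration of $(\X,-K_{\X/\Aff^1}+A_X(F)\X_0)$, and the two differ by the $k$-dependent shift $x\mapsto x+kA_X(F)$.

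This is not a cosmetic point, because the shift is exactly where the term $A_X(F)(-K_X)^n$ in $\beta^G$ comes from, and your sketch of the intersection computation is internally inconsistent. Having normalized $\LL=-K_{\X/\Aff^1}$, you have $K_{\bar\X/\PP^1}=-\bar\LL$, so the term $\bar\LL^n\cdot K_{\bar\X/\PP^1}$ equals $-\bar\LL^{n+1}$ and cannot independently ``produce $A_X(v_i)(-K_X)^n$'' as you assert; and if your filtration claim were true, then $\bar\LL^{n+1}/(n+1)$ would equal $\int_0^{\infty}\vol_X\left(\OS_X(-K_X)\otimes\aid_x(G\cdot v_i)\right)dx$, so \eqref{int} would yield $\DF(\X,\LL)=-\frac{1}{(-K_X)^n}\int_0^\infty\vol_X\left(\OS_X(-K_X)\otimes\aid_x(G\cdot v_i)\right)dx$, i.e.\ $\beta^G$ with its $A_X$ term missing, contradicting the theorem. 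The consistent bookkeeping (the paper's, following Fujita's Theorem 5.1) is either to compute $\bar\LL^{n+1}$ with the \emph{shifted} filtration, or to compute with the unshifted filtration \eqref{filt} but with its actual polarization $\LL''=-K_{\X/\Aff^1}+A_X(F)\X_0$, in which case $K_{\bar\X/\PP^1}=-\bar\LL''+A_X(F)\X_0$ and the $A_X$ term genuinely arises from the relative-canonical term. Your other steps (finite orbit and transitivity, divisoriality, transfer of finite generation---which does survive the shift---and the invariance of $\DF$ and $J^{\textrm{NA}}$ under twisting by multiples of $\X_0$) are sound, so the proof is repairable by inserting the correct filtration formula; but as written the central identification, and hence the derivation of $\DF(\X,\LL)=\beta^G(v_i)/(-K_X)^n$, does not go through.
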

\begin{proof}
First note that we may assume $\LL=-K_{\X/\Aff^1}$. Next we claim that each $v_i$ is a divisorial valuation on $X$ corresponding to distinct divisor $F_i$ over $X$. Indeed, Suppose $\ord_{\X_0^{i}}$ and $\ord_{\X_0^{j}}$ restrict to the same valuation $v_i$ and $v_j$ on $X$. Then since  $\ord_{\X_0^{i}}(t)=\ord_{\X_0^{j}}(t)=1$, we know that the two valuations are the same on $\X$. Lemma 4.1 in \cite{BHJ17} shows that $v_i$'s are divisorial valuations on $X$. Denote $v_i=c\ord_{F_i}$, where $F_i$ is a prime divisor on $X$. Since the $G$-action permutes all the irreducible components of the central fiber $\X_0$, we know that ${F_1,\ldots,F_N}$ form the orbit of $F_1$ under the $G$-action. 

Now consider a common resolution of $X\times \Aff^1$ and $\X$ which gives the following diagram over $\Aff^1$: 
\begin{center}
\begin{tikzcd}
& \Y \arrow[dl,"p"']
	\arrow[dr,"q"] & \\
X\times \Aff^1 \arrow [rr,dotted] & & \X.
\end{tikzcd}
\end{center}
Let $V_k=H^0(X,-kK_X)$ and $\F_{\Y}^xV_k$ be the filtration on $V_\bullet$ induced by $(\Y,q^\ast \LL)$. Following the same argument as in the proof of Theorem 5.1 in \cite{Fuj16}, we have
$$
\F_{\Y}^xV_k=\{f\in V_k| v_i(f)\geq kA_X(F_i)+x,~i=1,\ldots,N\}.
$$
Note that  $A_X(F_i)=A_X(F_1)$ for any $i$. Therefore we can write the filtration $\F_{\Y}^xV_k$ as
$$
\F_{\Y}^xV_k=
\begin{cases}
H^0\left(X,-kK_X-(kA_X(F_1)+x)\sum_{i=1}^NF_i\right) & x\geq -kA_X(F_1),\\
V_k & x< -kA_X(F_1).
\end{cases}
$$
Finite generation of $\bigoplus_{j,k}\F_{\Y}^jV_k$ would imply finite generation of 
$$
\bigoplus_{j,k\geq 0}H^0\left(X,-kK_X-j\sum_{i=1}^NF_i\right)
$$ 
as in the proof of Theorem 5.1 in \cite{Fuj16}, and hence $F_i$ is $G$-dreamy. The same computation in that proof would also give us $\DF(\X,\LL)=\beta^G(v_i)/(-K_X)^n$ and $J^{\textrm{NA}}(\X,\LL)=j^G(v_i)/(-K_X)^n$ for any $i$.
\end{proof}
The following theorem is an immediate consequence of Theorem \ref{tF}.
\begin{theorem}\label{part2}
If there exists some $0<\delta<1$, such that $\beta^G(F)>0(\geq \delta j^G(F))$ for any $G$-dreamy divisor $F$ over $X$, then $X$ is (uniformly) $G$-equivariantly K-stable.
\end{theorem}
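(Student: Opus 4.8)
The plan is to reduce an arbitrary $G$-equivariant normal test configuration to a weakly $G$-special one by Theorem \ref{GS}, and then to read off its Donaldson–Futaki and norm invariants from the equivariant beta and $j$ invariants of the associated $G$-dreamy divisor via Theorem \ref{tF}. Concretely, given any $G$-equivariant normal test configuration $(\X,\LL)$ of $(X,-K_X)$, Theorem \ref{GS} produces a finite base change $\phi:\Aff^1\to\Aff^1$ and a weakly $G$-special test configuration $(\X^s,-K_{\X^s})$ with
$$
\DF(\X^s,-K_{\X^s})-\delta J^{\textrm{NA}}(\X^s,-K_{\X^s})\leq \deg\phi\left(\DF(\X,\LL)-\delta J^{\textrm{NA}}(\X,\LL)\right)
$$
for every $0\le\delta\le 1$. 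Since $\deg\phi>0$, it suffices to control the sign of the left-hand side, for a suitable choice of $\delta$ in each case.

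For the uniform statement, I would fix the $\delta\in(0,1)$ supplied by the hypothesis $\beta^G(F)\ge\delta j^G(F)$. If $\X^s$ is trivial the left-hand side vanishes. If $\X^s$ is nontrivial, then Theorem \ref{tF} applies: the restriction $v$ to $X$ of the valuation of an irreducible component of the central fiber is $G$-dreamy and satisfies $\DF(\X^s,-K_{\X^s})=\beta^G(v)/(-K_X)^n$ and $J^{\textrm{NA}}(\X^s,-K_{\X^s})=j^G(v)/(-K_X)^n$. Since $\beta^G$ and $j^G$ are homogeneous in the valuation, the hypothesis applied to the underlying $G$-dreamy divisor gives $\beta^G(v)\ge\delta j^G(v)$, so the left-hand side equals $(\beta^G(v)-\delta j^G(v))/(-K_X)^n\ge 0$. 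In either case the left-hand side is nonnegative, hence $\DF(\X,\LL)\ge\delta J^{\textrm{NA}}(\X,\LL)$; as $(\X,\LL)$ was arbitrary, $X$ is uniformly $G$-equivariantly K-stable with the same $\delta$.

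For the strict K-stability statement, I would take $(\X,\LL)$ nontrivial, so $J^{\textrm{NA}}(\X,\LL)>0$, and aim to show $\DF(\X,\LL)>0$, splitting into two cases according to the output $\X^s$. If $\X^s$ is nontrivial, I apply the comparison with $\delta=0$ together with Theorem \ref{tF} to obtain $0<\beta^G(v)/(-K_X)^n=\DF(\X^s,-K_{\X^s})\le\deg\phi\,\DF(\X,\LL)$, forcing $\DF(\X,\LL)>0$. I expect the only genuine obstacle to be the complementary case, where the degeneration process collapses a nontrivial $(\X,\LL)$ to the trivial special configuration and Theorem \ref{tF} is unavailable; here the key observation is that one may instead run the comparison with $\delta=1$, so that the left-hand side becomes $\DF(\X^s,-K_{\X^s})-J^{\textrm{NA}}(\X^s,-K_{\X^s})=0$, giving $0\le\deg\phi\,(\DF(\X,\LL)-J^{\textrm{NA}}(\X,\LL))$ and hence $\DF(\X,\LL)\ge J^{\textrm{NA}}(\X,\LL)>0$. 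Either way $\DF(\X,\LL)>0$, so $X$ is $G$-equivariantly K-stable. The freedom to vary $\delta\in\{0,1\}$ in Theorem \ref{GS} is precisely what lets me treat the nontrivial and trivial outputs uniformly without having to argue that the MMP process preserves nontriviality.
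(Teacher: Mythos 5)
Your proposal is correct and follows the same route as the paper: the paper derives this theorem as an immediate consequence of Theorem \ref{tF}, implicitly combined with the reduction to weakly $G$-special test configurations furnished by Theorem \ref{GS}. The one point where you add genuine detail is the case where the MMP output $\X^s$ is trivial while $(\X,\LL)$ is not --- the paper subsumes this under ``it is enough to check weakly $G$-special test configurations,'' whereas your use of the $\delta=1$ instance of Theorem \ref{GS}, yielding $\DF(\X,\LL)\geq J^{\textrm{NA}}(\X,\LL)>0$ for nontrivial $(\X,\LL)$, is precisely the argument needed to make the strict-positivity (K-stability) case airtight without claiming the MMP process preserves nontriviality.
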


By Proposition \ref{BHJ}, we have a one-to-one correspondence between finitely generated filtrations on $V_\bullet$ and test configurations. Then  combining Proposition \ref{BHJ} with Theorem \ref{tF}, we have the following theorem: 
\begin{theorem}
Let $X$ be a $\Q$-Fano variety and $F$ a $G$-dreamy divisor over $X$. Define a filtration $\F$ on $V_\bullet$ as in \eqref{filt}. Then the test configuration 
$$(\X,\LL)=\left(\proj_{\Aff^1}\bigoplus_{k\in \Z_{\geq 0},j\in \Z} t^{-j}\F^j V_k,\OS(1)\right)$$
is a weakly $G$-special test configuration. Moreover, we have $\DF(\X,\LL)=\beta^G(F)/(-K_X)^n$ and $J^{\textrm{NA}}(\X,\LL)=j^G(F)/(-K_X)^n$.
\end{theorem}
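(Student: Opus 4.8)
The plan is to obtain the test configuration from Proposition \ref{BHJ}, verify by hand that its central fiber is reduced and $G$-irreducible so that it is weakly $G$-special, and then read off the two numerical identities from Theorem \ref{tF}. To begin, I would note that $F$ being $G$-dreamy is precisely the assertion that $\bigoplus_{k,j\ge 0}H^0(X,\OS_X(-kK_X)\otimes\aid_j(G\cdot\ord_F))=\bigoplus_{k,j}\F^jV_k$ is a finitely generated $\C$-algebra, so that $\F$ is a finitely generated filtration and Proposition \ref{BHJ} produces the stated test configuration $(\X,\LL)$. Since $\aid_j(G\cdot\ord_F)=\bigcap_{g\in G}\aid_j(g\cdot\ord_F)$ is a $G$-invariant ideal sheaf, the filtration $\F$ is $G$-invariant, hence the induced $\C^\ast$-action commutes with the $G$-action and $(\X,\LL)$ is $G$-equivariant.

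The heart of the matter is to show that $\X_0=\proj\bigoplus_{k,j}\gr_\F^jV_k$ is reduced and $G$-irreducible. Let $F_1=F,\dots,F_N$ be the orbit of $F$ under $G$; because $\aid_j(G\cdot\ord_F)=\bigcap_{i=1}^N\aid_j(\ord_{F_i})$, the filtration $\F$ is exactly the one cut out by the $G$-pseudovaluation $G\cdot\ord_F=\min_i\ord_{F_i}$, with integer jumps. Passing to a $G$-equivariant log resolution on which all the $F_i$ are smooth, I would run the argument of \cite[Theorem 5.1]{Fuj16} equivariantly: each primitive divisorial valuation $\ord_{F_i}$ contributes one integral component $\X_0^i$ of $\X_0$ appearing with multiplicity one, so $\X_0$ is reduced, and the transitive $G$-action on $\{F_1,\dots,F_N\}$ permutes these components transitively, so $\X_0$ is $G$-irreducible. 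This reducedness step is where I expect the main obstacle to lie, since it is precisely here that the primitivity of $\ord_{F_i}$ and the integrality of the grading must be used with care, exactly as in Fujita's non-equivariant analysis; the only genuinely new input is the orbit bookkeeping furnished by the finite-orbit hypothesis.

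Granting that $(\X,\LL)$ is weakly $G$-special, the two identities follow from Theorem \ref{tF}. Its filtration formula identifies the valuations $v_i$ restricted from the components $\X_0^i$, up to scaling and the harmless twist of $\LL$ by a multiple of $\X_0$ (which alters neither $\DF$ nor $J^{\textrm{NA}}$ for a weakly $G$-special configuration), with the valuations $\ord_{F_i}$ defining $\F$; in particular the prime divisors recovered are exactly the orbit $F_1,\dots,F_N$. Since $\beta^G$ and $j^G$ depend only on the prime divisor, equivalently only on the $G$-pseudovaluation $G\cdot\ord_F$, they are constant along the orbit, so $\beta^G(v_i)=\beta^G(F)$ and $j^G(v_i)=j^G(F)$. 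Theorem \ref{tF} then yields $\DF(\X,\LL)=\beta^G(F)/(-K_X)^n$ and $J^{\textrm{NA}}(\X,\LL)=j^G(F)/(-K_X)^n$, completing the argument.
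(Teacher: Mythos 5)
Your overall skeleton is the same as the paper's: use Proposition \ref{BHJ} to build $(\X,\LL)$ from the finitely generated $G$-invariant filtration, verify that $\X_0$ is reduced and $G$-irreducible, then invoke Theorem \ref{tF} (after identifying the component valuations with the orbit of $\ord_F$) to get the two formulas. Your first and third paragraphs are fine. But the step you yourself flag as the heart of the matter --- reducedness and $G$-irreducibility of $\X_0$ --- is not actually proved; you assert that ``each primitive divisorial valuation $\ord_{F_i}$ contributes one integral component of $\X_0$ appearing with multiplicity one'' and propose to get this by running Fujita's Theorem 5.1 equivariantly on a log resolution. That appeal does not work: Fujita's Theorem 5.1 goes in the opposite direction (from a given test configuration to the filtration it induces, via a common resolution), and the mechanism behind integrality of the central fiber in the non-equivariant dreamy case is that $\ord_F$ is \emph{multiplicative}, so that the graded ring $\bigoplus_{k,j}\gr_\F^jV_k$ is an integral domain. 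That mechanism fails here, and in an essential way: $G\cdot\ord_F=\min_i\ord_{F_i}$ is only superadditive, the graded ring genuinely has zero divisors when $N>1$ (take $f_1$ with $\ord_{F_1}(f_1)=0$, $\ord_{F_2}(f_1)\gg 0$ and $f_2$ with the roles of $F_1,F_2$ swapped; then $\bar f_1\bar f_2=0$), and correspondingly $\X_0$ is reducible. So ``the same argument, done equivariantly'' is precisely what is unavailable; passing to a resolution where the $F_i$ are smooth does not help because $\X$ is the Proj of a section ring on $X$, not a blow-up of $Y\times\Aff^1$, and relating the two is itself a nontrivial task you have not carried out.

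The paper fills this gap with two short algebraic arguments on the graded ring, and these are the genuinely new inputs you are missing. For reducedness: if $f\in\F^jV_k\setminus\F^{j+1}V_k$, i.e.\ $G\cdot\ord_F(f)=j$, then because the orbit is finite the minimum commutes with taking powers,
$$
G\cdot\ord_F(f^l)=\min_i \ord_{F_i}(f^l)=\min_i\, l\,\ord_{F_i}(f)=l\,j,
$$
so $\bar f^{\,l}\neq 0$ in $\gr_\F^{lj}$; hence no homogeneous element is nilpotent and the ring is reduced. For $G$-irreducibility: given nonzero homogeneous $\bar f_1,\bar f_2$, choose $F_{l_i}$ achieving the minimum for $f_i$ and $g\in G$ with $\ord_{F_{l_2}}=g\cdot\ord_{F_{l_1}}$ (here transitivity of $G$ on the orbit is used); then
$$
G\cdot\ord_F\bigl(g(f_1)f_2\bigr)=\ord_{F_{l_2}}\bigl(g(f_1)f_2\bigr)=j_1+j_2,
$$
so $\overline{g(f_1)}\,\bar f_2\neq 0$. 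Combined with reducedness, this rules out a decomposition of $\X_0$ into two proper $G$-invariant closed subsets, i.e.\ $G$ permutes the minimal primes transitively. Without these two arguments (or a complete substitute), your proof has a hole exactly at the point on which everything else rests.
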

\begin{proof}
We only need to show that $\X_0$ is reduced and $G$-irreducible. Note that
$$
\X_0=\proj \bigoplus_{k,j\geq 0} \gr_{\F}^jV_k.
$$
Pick any $f\in {\F}^{j}V_{k}\backslash {\F}^{j+1}V_{k}$. We have $G\cdot \ord_F(f)=j$. For any positive integer $l$, we have $G\cdot \ord_F(f^l)=lj$ (see also \cite[Remark 3.12]{arcs}), or equivalently $f^l\in {\F}^{lj}V_{k}\backslash {\F}^{lj+1}V_{k}$. Therefore 
we know that $\bigoplus_{k,j\geq 0} \gr_{\F}^jV_k$ is reduced. 

Now pick any $f_i\in {\F}^{j_i}V_{k_i}\backslash {\F}^{j_i+1}V_{k_i}$ for $i=1,2$. Let $F_1,\ldots,F_N$ form the orbit of $F$ under $G$-action. Since the orbit is finite, we can find $F_{l_i}$ such that $\ord_{F_{l_i}}(f_i)=G\cdot \ord_F(f_i)=j_i$ for $i=1,2$. Suppose $\ord_{F_{l_2}}=g\cdot\ord_{F_{l_1}}$ for some $g\in G$. Then we have
$$
G\cdot \ord_F (g(f_1)f_2)=\ord_{F_{l_2}}(g(f_1)f_2)=j_1+j_2,
$$
and consequently $g(f_1)f_2\in {\F}^{j_1+j_2}V_{k_1+k_2}\backslash {\F}^{j_1+j_2+1}V_{k_1+k_2}$. Therefore we know that $\X_0$ is $G$-irreducible.
\end{proof}

An immediate consequence is the following theorem:
\begin{theorem}\label{part3}
If $X$ is $G$-equivariantly K-stable, then $\beta^G(F)> 0$ for any $G$-dreamy divisor $F$ over $X$. 
\end{theorem}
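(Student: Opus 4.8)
The plan is to read this off from the preceding theorem together with the definition of $G$-equivariant K-stability. Given a $G$-dreamy prime divisor $F$ over $X$, the preceding theorem attaches to the filtration $\F$ of \eqref{filt} a weakly $G$-special test configuration $(\X,\LL)$ with $\DF(\X,\LL)=\beta^G(F)/(-K_X)^n$ and $J^{\textrm{NA}}(\X,\LL)=j^G(F)/(-K_X)^n$. A weakly $G$-special test configuration is in particular a $G$-equivariant normal test configuration, so I can feed $(\X,\LL)$ into Definition \ref{def:k-stab}.

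I would then argue by a dichotomy on whether $(\X,\LL)$ is trivial, using that $J^{\textrm{NA}}$ vanishes precisely on the trivial test configuration, i.e. that triviality is equivalent to $j^G(F)=0$. If $(\X,\LL)$ is nontrivial, then $G$-equivariant K-stability gives $\DF(\X,\LL)>0$, hence $\beta^G(F)=\DF(\X,\LL)\cdot(-K_X)^n>0$. If instead $(\X,\LL)$ is trivial, then $j^G(F)=0$; since the integrand defining $j^G(F)$ is nonnegative and the function $x\mapsto\vol_X(\OS_X(-K_X)\otimes\aid_x(G\cdot\ord_F))$ is continuous and non-increasing with value $(-K_X)^n$ at $x=0$, this forces $\tau^G(F)=0$ and hence $\int_0^{+\infty}\vol_X(\OS_X(-K_X)\otimes\aid_x(G\cdot\ord_F))\,dx=0$. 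Therefore $\beta^G(F)=A_X(F)(-K_X)^n>0$, using $A_X(F)>0$ for a divisor over a klt variety. Either way $\beta^G(F)>0$.

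In fact the trivial case never occurs for a genuine finite-orbit divisor: passing to a $G$-equivariant resolution $\pi:Y\to X$ extracting the orbit $F_1,\dots,F_N$ of $F$ and using $\vol_X(\OS_X(-K_X)\otimes\aid_x(G\cdot\ord_F))=\vol_Y(\pi^\ast(-K_X)-x\sum_{i=1}^N F_i)$ from the remark after Theorem \ref{part1}, one sees that this volume equals $(-K_X)^n>0$ at $x=0$, so by continuity $\tau^G(F)>0$, and strict decay then yields $j^G(F)>0$. I would include the dichotomy anyway so that the argument does not depend on this.

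The main obstacle is the volume-theoretic input underlying the equivalence ``trivial $\iff j^G(F)=0$'': one must know that $x\mapsto\vol_X(\OS_X(-K_X)\otimes\aid_x(G\cdot\ord_F))$ is continuous and strictly decreasing on the range where it is positive. Strictness is the delicate point, since when all the $F_i$ are $\pi$-exceptional the first derivative $-n\,(-K_X)^{n-1}\cdot\pi_\ast(\sum_i F_i)$ vanishes at $x=0$; one cannot conclude from the linearization alone and must appeal to strict monotonicity of the volume function along an effective direction inside the big cone.
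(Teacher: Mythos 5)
Your proof is correct and takes essentially the same route as the paper, which derives Theorem \ref{part3} as an immediate consequence of the preceding theorem (a $G$-dreamy $F$ yields a weakly $G$-special, hence $G$-equivariant normal, test configuration with $\DF(\X,\LL)=\beta^G(F)/(-K_X)^n$) combined with Definition \ref{def:k-stab}; your triviality dichotomy just makes explicit a point the paper glosses over. Note also that the ``main obstacle'' you flag is not actually needed: strict decrease of the volume function plays no role, since continuity of $x\mapsto\vol_Y\left(\pi^\ast(-K_X)-x\sum_{i=1}^N F_i\right)$ alone gives $\tau^G(F)>0$ (positivity at $x=0$) and forces the volume to reach $0$ at $x=\tau^G(F)$, whence $j^G(F)>0$, $J^{\textrm{NA}}(\X,\LL)>0$, and the trivial case is vacuous.
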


Finally we finish the proof of Theorem \ref{val} by combining the above results. Theorem \ref{part1} and Theorem \ref{part2} gives the valuative criterion for uniform equivariant K-stability, which is part (1) in Theorem \ref{val}. If we set $\delta=0$ in Theorem \ref{part1} and Theorem \ref{part2}, we get the corresponding valuative criterion for equivariant K-semistability, which is part (2) in Theorem \ref{val}. Theorem \ref{part2} and Theorem \ref{part3} gives us the valuative criterion for equivariant K-stability, which is part (3) in Theorem \ref{val}.

\section{Equivariant normalized volumes}
Let $X$ be an $n$-dimensional $\Q$-Fano variety with group action $G$ and $F$ a prime divisor over $X$. Denote by $Y=C(X,-K_X)$ the cone over $X$ with respect to the polarization $-K_X$ and $o\in Y$ the vertex of the cone. Suppose $\pi:~Z=Bl_oY\to Y$ is the blow-up of $Y$ at $o$. Let $E$ be the exceptional divisor, and $\FF$ the pull back of $F$ to Z. Denote the divisorial valuation $\ord_E$ by $v_0$ and $\ord_{\FF}$ by $v_F$. Then for $t\geq 0$, let $v_t$ be the quasi-monomial valuation between $v_0$ and $v_F$ with weight $(1,t)$. For $f_m\in V_m=H^0(X,-mK_X)$, we have $v_t(f_m)=v_0(f_m)+tv_F(f_m)=m+t\ord_F(f_m)$.  Note that $v_t$ is still centered at $o$. Since there is a natural $G$-action induced on the cone $Y$ and the blow-up $Z$, we consider the $G$-pseudovaluation $G\cdot v_t$. The following proposition gives a relation between the derivative of the normalized volume $\widehat{\vol}(G\cdot v_t)$ and $\beta^G(F)$.
\begin{proposition}\label{der}
Under the above notations, we have
$$
\frac{d}{dt}\widehat{\vol}(v_t)\bigg|_{t=0}=(n+1)\beta^G(F).
$$
\end{proposition}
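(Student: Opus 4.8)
The plan is to write $\widehat{\vol}(G\cdot v_t)=A_Y(v_t)^{n+1}\,\vol(G\cdot v_t)$, evaluate both factors as explicit functions of $t$ near $t=0$, and differentiate the product. Since $\dim Y=n+1$ and $v_t$ is the quasi-monomial valuation of weight $(1,t)$ on the pair $(E,\FF)$, additivity of log discrepancies on a common log smooth model gives $A_Y(v_t)=A_Y(v_0)+t\,A_Y(v_F)$; the standard cone computation (compare \cite{Li18,LX16}) gives $A_Y(v_0)=1$ for the canonical valuation of the cone over $(X,-K_X)$ and $A_Y(v_F)=A_X(F)$ for the horizontal divisor $\FF$, so that $A_Y(v_t)=1+t\,A_X(F)$. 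It then remains to compute $\vol(G\cdot v_t)$.

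For the volume I would use the section ring $R=\bigoplus_{m\ge 0}V_m$ with $V_m=H^0(X,-mK_X)$, whose localization at the vertex is $\OS_{Y,o}$ and whose degree-$m$ piece is $V_m$. On a homogeneous $f\in V_m$ one has $v_0(f)=m$ and $(g\cdot v_t)(f)=m+t\,(g\cdot\ord_F)(f)$; as $v_0$ and $v_F$ are $\C^\ast$-invariant, each $\aid_\lambda(g\cdot v_t)$ and hence $\aid_\lambda(G\cdot v_t)=\bigcap_g\aid_\lambda(g\cdot v_t)$ is homogeneous, so $\dim_\C\OS_{Y,o}/\aid_\lambda(G\cdot v_t)$ splits as a sum over $m$. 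Using $G\cdot v_t(f)=m+t\,(G\cdot\ord_F)(f)$ one identifies, for $t>0$,
$$
\aid_\lambda(G\cdot v_t)\cap V_m=H^0\!\left(X,\OS_X(-mK_X)\otimes\aid_{(\lambda-m)/t}(G\cdot\ord_F)\right).
$$
Writing $\phi(x):=\vol_X(\OS_X(-K_X)\otimes\aid_x(G\cdot\ord_F))$ and $V:=(-K_X)^n=\phi(0)$, asymptotic Riemann--Roch applied degree by degree gives $\dim_\C\OS_{Y,o}/\aid_\lambda(G\cdot v_t)=\tfrac{1}{n!}\sum_{0\le m<\lambda}m^n\bigl[V-\phi((\lambda-m)/(mt))\bigr]+o(\lambda^{n+1})$, and the substitution $m=\lambda s$ yields $\vol(G\cdot v_t)=(n+1)\int_0^1 s^n\bigl[V-\phi((1-s)/(ts))\bigr]\,ds$.

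The last step is a change of variables followed by differentiation. Setting $x=(1-s)/(ts)$, i.e.\ $s=(1+xt)^{-1}$, turns the integral into
$$
\vol(G\cdot v_t)=(-K_X)^n-(n+1)\,t\int_0^\infty\frac{\phi(x)}{(1+xt)^{n+2}}\,dx .
$$
Multiplying by $(1+t\,A_X(F))^{n+1}$ and differentiating at $t=0$, the explicit factor of $t$ kills the $t$-dependence inside the remaining integral; the derivative of $(1+t\,A_X(F))^{n+1}$ contributes $(n+1)A_X(F)(-K_X)^n$ and the derivative of the volume factor contributes $-(n+1)\int_0^\infty\phi(x)\,dx$, so that
$$
\frac{d}{dt}\widehat{\vol}(v_t)\Big|_{t=0}=(n+1)\Bigl(A_X(F)(-K_X)^n-\int_0^\infty\phi(x)\,dx\Bigr)=(n+1)\beta^G(F).
$$

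The main difficulty I anticipate is analytic rather than algebraic: passing from the degree-wise sum to the integral needs uniform asymptotic Riemann--Roch for the ideal-twisted graded linear series $\{H^0(X,-mK_X\otimes\aid_{mx}(G\cdot\ord_F))\}_m$, together with continuity of $\phi$ and its vanishing for $x\ge\tau^G(F)$, and the final differentiation must be justified by interchanging $\tfrac{d}{dt}$ with the integral near $t=0$. These estimates are exactly the ones available when $F$ is of finite orbit, so that $\aid_x(G\cdot\ord_F)=\bigcap_g\aid_x(g\cdot\ord_F)$ is a finite, hence coherent, intersection and $G\cdot v_t$ is of finite orbit. Concretely I would run the volume asymptotics on a single $G$-equivariant resolution $\pi:X'\to X$ along which the orbit $F_1,\dots,F_N$ of $F$ is simple normal crossing, exactly as in the proof of Theorem \ref{part1}; there $\phi(x)=\vol_{X'}(\pi^\ast(-K_X)-x\sum_{i=1}^N F_i)$, for which continuity and the value of $\int_0^\infty\phi\,dx$ reduce to standard volume (Okounkov-body) estimates.
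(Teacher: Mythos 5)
Your proposal is correct and follows essentially the same route as the paper: decompose $\widehat{\vol}(G\cdot v_t)=A_Y(v_t)^{n+1}\vol(G\cdot v_t)$ with $A_Y(v_t)=1+tA_X(F)$, identify $\aid_\lambda(G\cdot v_t)\cap V_m$ with $H^0\bigl(X,\OS_X(-mK_X)\otimes\aid_{(\lambda-m)/t}(G\cdot\ord_F)\bigr)$, arrive at $\vol(G\cdot v_t)=(-K_X)^n-(n+1)t\int_0^\infty \phi(x)(1+xt)^{-(n+2)}\,dx$, and differentiate at $t=0$. The only difference is that you carry out the sum-to-integral passage and change of variables by hand (with appropriate attention to uniformity of the Riemann--Roch asymptotics), whereas the paper outsources exactly this step to Lemma 4.5 of \cite{Li17K}; the content is the same.
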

\begin{proof}
The idea of the proof basically follows from \cite{Li17K} and \cite{LX16}.
First of all, we have $A_Y(\FF)=A_X(F)$, and $A_Y(v_0)=1$. Therefore $A_Y(v_t)=1+tA_X(F)$. Next we compute the volume of $G\cdot v_t$. Let $V_m=H^0(X,-mK_X)$ and $V=\oplus V_m$. Note that for $f\in V_m$, we have $v_0(f)=m$. Then
$$
\dim V/\aid_\lambda(G\cdot v_t)=\sum_{m=0}^{\lfloor \lambda\rfloor} \dim V_m/\aid_\lambda(G\cdot v_t)=\sum_{m=0}^{\lfloor \lambda\rfloor} \dim V_m-\sum_{m=0}^{\lfloor \lambda\rfloor} \dim V_m\cap \aid_\lambda(G\cdot v_t).
$$
By asymptotic Riemann-Roch, we know that
$$
\sum_{m=0}^{\lfloor \lambda\rfloor} \dim V_m=\frac{(-K_X)^n\lambda^{n+1}}{(n+1)!}+O(\lambda^n).
$$
On the other hand, for any $f\in V_m$, we know that $g\cdot v_t(f)\geq \lambda$ is equivalent as $g\cdot v_F(f)\geq \frac{\lambda-m}{t}$. Then we know that
$$
V_m\cap \aid_\lambda(G\cdot v_t)=H^0\left(\OS_X(-mK_X)\otimes\aid_{\frac{\lambda-m}{t}}(G\cdot \ord_F) \right).
$$
According to Lemma 4.5 in \cite{Li17K}, we know that
\begin{align*}
&\sum_{m=0}^{\lfloor \lambda\rfloor} \dim V_m\cap \aid_\lambda(G\cdot v_t)\\
=&
\frac{\lambda^{n+1}}{n!}\int_0^{+\infty}\vol_X\left(\OS_X(-K_X)\otimes\aid_x(G\cdot \ord_F)\right)\frac{t}{(1+tx)^{n+2}}\,dx+O(\lambda^n).
\end{align*}
Putting the above expressions together, we have
$$
\vol(G\cdot v_t)=(-K_X)^n-(n+1)\int_0^{+\infty}\vol_X\left(\OS_X(-K_X)\otimes\aid_x(G\cdot \ord_F)\right)\frac{t}{(1+tx)^{n+2}}\,dx.
$$
Taking the derivative we get
$$
\frac{d}{dt}\widehat{\vol}(G\cdot v_t)\bigg|_{t=0}=(n+1)\beta^G(F).
$$
\end{proof}
An immediate consequence of Proposition \ref{der} gives one direction of Theorem \ref{norm}:
\begin{corollary}
If the normalized volume function $\widehat{\vol}$ is minimized at $v_0$ among all finite-orbit $G$-pseudovaluations on $Y$ centered at $o$, then $X$ is $G$-equivariantly K-semistable.
\end{corollary}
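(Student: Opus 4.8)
The plan is to read this off from Proposition \ref{der} combined with the valuative criterion for semistability already established in Theorem \ref{val}. By part (2) of Theorem \ref{val}, it suffices to prove that $\beta^G(F)\geq 0$ for every finite-orbit prime divisor $F$ over $X$. So I would fix such an $F$ and form the associated family $\{v_t\}_{t\geq 0}$ of quasi-monomial valuations of weight $(1,t)$ interpolating between $v_0$ and $v_F$, exactly as in the setup preceding Proposition \ref{der}, together with the induced $G$-pseudovaluations $G\cdot v_t$.

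The first thing I would verify is that each $G\cdot v_t$ actually lies in the admissible class over which minimality is assumed, namely that it is a finite-orbit $G$-pseudovaluation centered at $o$. Since $v_0$ is $G$-invariant, for any $g\in G$ one has $g\cdot v_t(f)=v_t(f\circ g)=v_0(f)+t\,(g\cdot v_F)(f)$, so $g\cdot v_t$ is the quasi-monomial valuation of weight $(1,t)$ between $v_0$ and $g\cdot v_F$. As $F$ has finite orbit, $g\cdot v_F$ takes only finitely many values, so the orbit of $v_t$ is finite; and each $v_t$ is centered at $o$ as already noted in the text. Hence $G\cdot v_t$ is a finite-orbit $G$-pseudovaluation centered at $o$ for every $t\geq 0$, and $G\cdot v_0=v_0$.

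With this in place, the hypothesis gives $\widehat{\vol}(G\cdot v_t)\geq \widehat{\vol}(G\cdot v_0)$ for all $t\geq 0$, so the right derivative of $t\mapsto \widehat{\vol}(G\cdot v_t)$ at $t=0$ is nonnegative. By Proposition \ref{der} this derivative equals $(n+1)\beta^G(F)$, whence $\beta^G(F)\geq 0$. Since $F$ was an arbitrary finite-orbit prime divisor, Theorem \ref{val}(2) yields that $X$ is $G$-equivariantly K-semistable.

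The one point I would be most careful about is the passage from global minimality to the derivative inequality. We only know that $v_0$ minimizes $\widehat{\vol}$ over the restricted class of finite-orbit $G$-pseudovaluations, and the admissible perturbations $v_t$ run over $t\geq 0$ only; the correct and sufficient conclusion is therefore that the \emph{right} derivative at $t=0$ is $\geq 0$, which is precisely what Proposition \ref{der} computes. No two-sided stationarity is needed (and none is available, since $t<0$ need not give an admissible pseudovaluation). Everything else is a direct substitution, so this is genuinely an immediate corollary of Proposition \ref{der}.
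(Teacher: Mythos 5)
Your proposal is correct and follows the same route as the paper: the corollary is exactly the combination of Proposition \ref{der} (the right derivative of $t\mapsto\widehat{\vol}(G\cdot v_t)$ at $t=0$ equals $(n+1)\beta^G(F)$, which minimality forces to be nonnegative) with the valuative criterion of Theorem \ref{val}(2). Your extra verification that each $G\cdot v_t$ is a finite-orbit $G$-pseudovaluation centered at $o$, and your remark that only one-sided minimality in $t\geq 0$ is needed, are both accurate and simply make explicit what the paper leaves implicit in calling this an immediate consequence.
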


Repeating a similar computation together with convexity of the normalized volume function of $G\cdot v_t$ with respect to $t$ as in the proof of Theorem 4.5 in \cite{LX16} also gives the other direction of Theorem \ref{norm}.

\section{Equivariant alpha invariant and Tian's criterion}\label{sec:tian}
Let $X$ be a variety with $G\subset \aut(X)$ a group action on $X$. By replacing usual valuations with $G$-pseudovaluations, we can define the $G$-log canonical threshold of any effective divisor $D$ to be
$$
\glct(D):=\inf_{E}\frac{A_X(E)}{G\cdot \ord_E(D)},
$$
where $E$ runs through all prime divisors over $X$.
Next assume in addition that $X$ is $\Q$-Fano. We define the $G$-equivariant alpha invariant of $X$ to be
$$
\alpha_G(X)=\inf\{\glct(D)|0\leq D\sim_\Q -K_X\}.
$$
\begin{remark}
Note that Tian first defines $\alpha_G(X)$ analytically in \cite{tian_alpha}. It is then shown in the appendix of \cite{analpha} that the analytic definition of $\alpha_G(X)$ is the same as the following algebraic one:
$$
\alpha_G(X)=\inf_m\left\{\lct\left(X,\frac{1}{m}\Sigma\right)\bigg|\Sigma~\textrm{is a}~G\textrm{-invariant linear subsystem in}~|-mK_X|\right\}.
$$
The above two algebraic definitions of $\alpha_G(X)$ are in fact the same. Indeed, for any $G$-invariant linear system $\Sigma\sim -mK_X$, pick any divisor $D\in \Sigma$. Then for any prime divisor $E$ over $X$ and $g\in G$, we have $\ord_E(gD)\geq \ord_E(\Sigma)$. Therefore we know that 
$$
\glct\left(\frac{1}{m}D\right)\leq \lct\left(\frac{1}{m}\Sigma\right).
$$
Conversely, for any effective divisor $D\in |-mK_X|$, let $\Sigma$ be the linear subsystem of $|-mK_X|$ spanned by $\{gD|g\in G\}$. Then for any effective divisor $D'\in \Sigma$, we know that $\ord_E(D')\geq G\cdot \ord_E(D)$ and hence $\ord_E(\Sigma)\geq G\cdot \ord_E(D)$. Therefore we know that 
$$
\glct\left(\frac{1}{m}D\right)\geq \lct\left(\frac{1}{m}\Sigma\right).
$$
\end{remark}

Next we will give another proof of the following result in \cite{Al} which is the $G$-equivariant version of Tian's criterion.
\begin{theorem}[Theorem 1.10, \cite{Al}]\label{eqal}
Let $X$ be a $\Q$-Fano variety of dimension $n$ and $G\subset \aut(X)$ a group action on $X$. If  $\alpha_G(X)>n/(n+1)$ (resp. $\geq n/(n+1)$), then $X$ is uniformly $G$-equivariantly K-stable (resp. $G$-equivariantly K-semistable).
\end{theorem}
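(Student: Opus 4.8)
The plan is to deduce the statement from the equivariant valuative criterion of Theorem~\ref{val}, reducing everything to an estimate on $\beta^G(F)$ and $j^G(F)$ for finite-orbit divisors. Writing $V=(-K_X)^n$ and
$$
S^G(F):=\frac{1}{V}\int_0^{+\infty}\vol_X\left(\OS_X(-K_X)\otimes\aid_x(G\cdot\ord_F)\right)dx,
$$
one reads off directly from the definitions that $\beta^G(F)=V\left(A_X(F)-S^G(F)\right)$ and $j^G(F)=V\left(\tau^G(F)-S^G(F)\right)$. By Theorem~\ref{val}(1)--(2) it is then enough to produce a single $\delta\in(0,1)$ with $\beta^G(F)\geq\delta\,j^G(F)$ for every finite-orbit prime divisor $F$ (for the uniform case), respectively $\beta^G(F)\geq0$ (for the semistable case). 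Thus the whole problem reduces to the two inequalities $A_X(F)\geq\alpha_G(X)\,\tau^G(F)$ and $S^G(F)\leq\frac{n}{n+1}\tau^G(F)$, after which a short computation finishes the argument.

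First I would establish the \emph{equivariant alpha bound} $A_X(F)\geq\alpha_G(X)\,\tau^G(F)$. For any $x<\tau^G(F)$, the sheaf $\OS_X(-mK_X)\otimes\aid_{\lceil mx\rceil}(G\cdot\ord_F)$ admits a nonzero section $s$ for $m\gg0$; the divisor $D=\frac1m\operatorname{div}(s)$ is effective, $\Q$-linearly equivalent to $-K_X$, and satisfies $G\cdot\ord_F(D)\geq x$. Taking $E=F$ in the definition of $\glct$ gives
$$
\alpha_G(X)\leq\glct(D)\leq\frac{A_X(F)}{G\cdot\ord_F(D)}\leq\frac{A_X(F)}{x},
$$
and letting $x\to\tau^G(F)$ yields the claim.

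The key geometric input is the \emph{volume estimate} $S^G(F)\leq\frac{n}{n+1}\tau^G(F)$, and here I would use crucially that $F$ is of finite orbit. On a $G$-equivariant resolution $\pi:Y\to X$ on which the orbit $F_1,\dots,F_N$ of $F$ consists of prime divisors, one has $\vol_X\left(\OS_X(-K_X)\otimes\aid_x(G\cdot\ord_F)\right)=\vol_Y\left(\pi^\ast(-K_X)-x\sum_iF_i\right)$, so that $S^G(F)$ and $\tau^G(F)$ are the ordinary $S$-invariant and pseudoeffective threshold of the reduced divisor $D:=\sum_iF_i$ with respect to the \emph{nef and big} class $L:=\pi^\ast(-K_X)$. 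For such a volume function the bound $S\leq\frac{n}{n+1}T$ is standard: writing $P(x)$ and $N(x)$ for the positive and negative parts of $L-xD$, differentiability of the volume gives $\frac{d}{dx}\vol_Y(L-xD)=-n\,P(x)^{n-1}\!\cdot D$, and integration by parts together with $P(x)^{n-1}\cdot N(x)=0$ rewrites the integral as $\frac{n}{n+1}\int_0^{\tau^G(F)}P(x)^{n-1}\!\cdot L\,dx$; since $L-P(x)$ is effective and $L$ is nef, $P(x)^{n-1}\cdot L\leq L^n=V$, which is exactly $S^G(F)\leq\frac{n}{n+1}\tau^G(F)$. The elementary lower bound $S^G(F)\geq\frac{1}{n+1}\tau^G(F)$, a consequence of the concavity of $x\mapsto\vol_Y(L-xD)^{1/n}$, will also be used.

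Finally I would combine the inputs. If $\alpha_G(X)\geq\frac{n}{n+1}$, then $A_X(F)\geq\alpha_G(X)\tau^G(F)\geq\frac{n}{n+1}\tau^G(F)\geq S^G(F)$, so $\beta^G(F)\geq0$ and Theorem~\ref{val}(2) gives $G$-equivariant K-semistability. If $\alpha_G(X)>\frac{n}{n+1}$, choose $\delta=\min\left\{\tfrac12,\ \tfrac{(n+1)\alpha_G(X)-n}{n}\right\}\in(0,1)$; using $A_X(F)\geq\alpha_G(X)\tau^G(F)$ with the two volume bounds yields
$$
A_X(F)-S^G(F)\geq\left(\alpha_G(X)-\tfrac{n}{n+1}\right)\tau^G(F)=\tfrac{(n+1)\alpha_G(X)-n}{n}\cdot\tfrac{n}{n+1}\tau^G(F)\geq\delta\left(\tau^G(F)-S^G(F)\right),
$$
that is $\beta^G(F)\geq\delta\,j^G(F)$, and Theorem~\ref{val}(1) gives uniform $G$-equivariant K-stability. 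The step I expect to be the main obstacle is the volume estimate: although it is ultimately a clean consequence of the nefness of $\pi^\ast(-K_X)$, making the differentiation of the volume and the orthogonality $P(x)^{n-1}\cdot N(x)=0$ rigorous in dimension $n$ requires the positive-intersection/non-pluripolar product formalism, and one must verify carefully that passing to the finite orbit and to the reduced divisor $\sum_iF_i$ on $Y$ faithfully computes the pseudovaluation-theoretic quantities $S^G(F)$ and $\tau^G(F)$.
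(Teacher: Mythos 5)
Your proposal is correct, and its skeleton coincides with the paper's: both reduce, via the equivariant valuative criterion, to producing a uniform $\delta$ with $A_X(F)-S^G(F)\geq \delta\left(\tau^G(F)-S^G(F)\right)$, and both obtain this from the two estimates $A_X(F)\geq \alpha_G(X)\,\tau^G(F)$ and $S^G(F)\leq \tfrac{n}{n+1}\tau^G(F)$. The substantive divergence is in the proof of the second estimate. The paper (following \cite{fujita_2017}) uses restricted volumes: it writes $-\tfrac{1}{n}\tfrac{d}{dx}\vol_Y\left(\pi^\ast(-K_X)-x\sum F_i\right)=N\vol_{Y|F}\left(\pi^\ast(-K_X)-x\sum F_i\right)$ via Theorems A and B of \cite{diff}, then combines integration by parts with \emph{log concavity} of the restricted volume, comparing $V(x)$ against $V\left(S^G(F)\right)\left(x/S^G(F)\right)^{n-1}$. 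You instead stay with the total volume, using the positive-intersection-product derivative formula, Zariski orthogonality $P(x)^{n-1}\cdot N(x)=0$, and monotonicity $P(x)^{n-1}\cdot L\leq L^n$; this avoids restricted volumes entirely but leans on the same depth of input (differentiability of volumes plus an orthogonality-type theorem), so neither route is more elementary — they are parallel standard arguments and both are valid. Two organizational differences are also worth noting: (i) the paper verifies the inequality only for divisors induced by weakly $G$-special test configurations (in effect Theorem \ref{val}(1)(iii) for $G$-dreamy divisors), whereas you verify it for all finite-orbit divisors and invoke Theorem \ref{val}(1)(ii) and (2)(ii) — both suffice given the equivalences; (ii) the paper derives the bound $A_X(F)\leq\tfrac{n}{n+1}\tau^G(F)$ by contradiction, letting $\delta\to 0$ and then exhibiting a divisor $D$ with small $\glct$, whereas you prove the alpha bound $A_X(F)\geq\alpha_G(X)\tau^G(F)$ directly from sections vanishing along the orbit and then write down the explicit constant $\delta=\min\left\{\tfrac12,\tfrac{(n+1)\alpha_G(X)-n}{n}\right\}$; your version makes the uniformity of $\delta$ (a single constant working for every $F$) more transparent than the paper's contradiction phrasing. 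Finally, your use of the elementary lower bound $S^G(F)\geq\tfrac{1}{n+1}\tau^G(F)$ to control $\tau^G(F)-S^G(F)\leq\tfrac{n}{n+1}\tau^G(F)$ is correct and is the one small ingredient with no counterpart in the paper's write-up.
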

Before we prove Theorem \ref{eqal}, we would like to introduce for simplicity the following notations related to equivariant beta invariants of finite-orbit prime divisors. For any prime divisor $F$ over $X$ of finite orbit under the $G$-action, let $\pi:~Y\to X$ be a $G$-equivariant birational morphism such that $F_1,\ldots,F_N$ are prime divisors on $Y$ forming the orbit of $F$ under the $G$-action with $F_1=F$. We define
$$
S^G(F)=\frac{1}{(-K_X)^n}\int_0^{+\infty}\vol_Y\left(\pi^\ast(-K_X)-x\sum F_i \right)\,dx.
$$
Then we can write $\beta^G(F)$ as
$$
\beta^G(F)=(-K_X)^n\left(A_X(F)-S^G(F)\right).
$$
Also note that under the above notation, we have
$$
\tau^G(F)=\sup\left\{t>0\left|\vol_Y\left(\pi^\ast(-K_X)-t\sum F_i \right)>0\right\}\right.
$$ 
and
$$
j^G(F)=(-K_X)^n\left(\tau^G(F)-S^G(F)\right).
$$
\begin{proof}[Proof of Theorem \ref{eqal}]
The idea of the proof follows from \cite{fujita_2017}. We only prove uniform $G$-equivariant K-staiblity of $X$ since the proof for $G$-equivariant K-semistability is almost identical.

Take any weakly $G$-special test configuration $(\X,\LL)$. Let $F$ be the prime divisor over $X$ corresponding to the divisorial valuation on $X$ induced by one of the irreducible components of $\X_0$. Then the orbit of $F$ under the $G$-action is induced by all irreducible components of $\X_0$ and hence is finite. Let $F_1,\ldots,F_N$ form the orbit of $F$ with $F_1=F$. Under the above notation, it suffices to show that 
$$
A_X(F)-S^G(F)\geq\delta\frac{j^G(F)}{(-K_X)^n}
$$
for some $\delta\in (0,1)$ independent of $F$.

Using integration by parts, we have
$$
\int_0^{\tau^G(F)}\left(x-S^G(F)\right)\frac{d}{dx}\vol_Y\left(-\pi^\ast K_X-x\sum F_i \right)\,dx=0.
$$
Note that by Theorem A and Theorem B of \cite{diff}, we have 
$$
-\frac{1}{n}\frac{d}{dx}\vol_Y\left(-\pi^\ast K_X-x\sum F_i \right)=N\vol_{Y|F}\left(-\pi^\ast K_X-x\sum F_i \right).
$$
where $\vol_{Y|F}$ denotes the restricted volume (see \cite{ELMNP} for definition). For simplicity, we use $V(x)$ to denote the restricted volume function $\vol_{Y|F}\left(-\pi^\ast K_X-x\sum F_i \right)$. Then we have 
$$
\int_0^{\tau^G(F)}\left(x-S^G(F)\right)V(x)\,dx=0.
$$
Using log concavity of restricted volume, we have
$$
\left(x-S^G(F)\right)V(x)\leq \left(x-S^G(F)\right)V\left(S^G(F)\right) \left(\frac{x}{S^G(F)}\right)^{n-1}.
$$
Therefore we get that 
\begin{equation}\label{eqj}
S^G(F)\leq \frac{n}{n+1}\tau^G(F).
\end{equation}

Now suppose that
$$
A_X(F)-S^G(F)<\delta\frac{j^G(F)}{(-K_X)^n}
$$ 
for any $\delta\in (0,1)$. Recall that 
$$
\frac{j^G(F)}{(-K_X)^n}=\left(\tau^G(F)-S^G(F)\right),
$$
so we have
$$
A_X(F)<\delta\tau^G(F)+(1-\delta)S^G(F).
$$
Combining the above inequality with \eqref{eqj}, we get
$$
A_X(F)<\left(\frac{1}{n+1}\delta+\frac{n}{n+1}\right)\tau^G(F).
$$
Let $\delta\to 0$, and we know that 
$$
A_X(F)\leq \frac{n}{n+1}\tau^G(F).
$$
For arbitrarily small $\epsilon>0$, pick $0\leq D\sim_\Q -K_X$ such that $G\cdot \ord_F(D)=\tau^G(F)-\epsilon$.
Then we know that
$$
\glct(D)\leq \frac{A_X(F)}{G\cdot \ord_F(D)}\leq \frac{n}{n+1}\frac{\tau^G(F)}{\tau^G(F)-\epsilon},
$$
which implies $\alpha_G(X)\leq n/(n+1)$, contradicting to the assumption that $\alpha_G(X)>n/(n+1)$.
\end{proof}

\begin{remark}
Note that as a consequence of the conjecture that for a reductive group $G$, $X$ is $G$-equivariantly K-semistable (resp. $G$-equivariantly K-polystable) if and only if $X$ is K-semistable (resp. K-polystable), we have that if $\alpha_G(X)\geq n/(n+1)$ (resp.~$\alpha_G(X)>n/(n+1)$), then $X$ is K-semistable (resp. K-polystable). This generalizes the original Tian's criterion to any $\Q$-Fano varieties.
\end{remark}
As mentioned in the introduction, the above conjecture is proved in \cite{zhuang2020} after the original version of the paper is posted online. Therefore, we now indeed have the generalized Tian's criterion for $\Q$-Fano varieties.

\bibliographystyle{siam}
\bibliography{ref}
\end{document}